\documentclass{amsart}
\usepackage{enumitem,kantlipsum}

\usepackage[utf8]{inputenc}
\usepackage{physics}
\usepackage{amssymb}
\usepackage{amsmath}
\usepackage{hyperref}
\usepackage{amsfonts}
\usepackage{amsthm}
\usepackage{mathtools}
\usepackage{tikz}
\usepackage{tikz-cd}
\usepackage{algorithmic}
\usepackage{asymptote}
\usepackage{url}
\usepackage{colonequals} 
\usepackage{amsmath,amssymb}
\usepackage{todonotes}

\makeatletter
\newsavebox\myboxA
\newsavebox\myboxB
\newlength\mylenA

\hypersetup{colorlinks=true}

\def\equationautorefname~#1\null{%
  Equation~(#1)\null
}

\newcounter{itemadded}
\usepackage[OT2,T1]{fontenc}
\DeclareSymbolFont{cyrletters}{OT2}{wncyr}{m}{n}
\DeclareMathSymbol{\Sha}{\mathalpha}{cyrletters}{"58}

\newtheorem{assumption}{Assumption}
\newtheorem{theorem}{Theorem}[section]
\newtheorem{proposition}[theorem]{Proposition}
\newtheorem{example}[theorem]{Example}
\newtheorem{lemma}[theorem]{Lemma}
\newtheorem{corollary}[theorem]{Corollary}

\newtheorem{definition}[theorem]{Definition}
\newtheorem{notation}[theorem]{Notation}

\newcommand{\Lconn}{L^{\text{conn}}}
\newcommand{\ep}{\varepsilon}

\newcommand{\Sh}{\text{Sh}}

\newcommand{\R}{\mathbb{R}}
\newcommand{\Q}{\mathbb{Q}}
\newcommand{\Z}{\mathbb{Z}}

\newcommand{\C}{\mathbb{C}}

\newcommand{\Qlb}{\overline{\Q}_l}
\newcommand{\Fr}{\text{Fr}}
\newcommand{\Gal}{\mathrm{Gal}}
\newcommand{\NS}{\text{NS}}
\newcommand{\diag}{\mathrm{diag}}

\newcommand{\isig}{\iota(\sigma)}

\newcommand{\Hom}{\mathrm{Hom}}
\newcommand{\End}{\mathrm{End}}
\newcommand{\GO}{\mathrm{GO}}
\newcommand{\SO}{\mathrm{SO}}

\newcommand{\GL}{\mathrm{GL}}

\newcommand{\tW}{\widetilde{W}}
\newcommand{\MT}{\mathrm{MT}}
\newcommand{\pp}{\mathfrak{p}}
\newcommand{\OO}{\mathfrak{o}}
\newcommand{\EE}{\mathcal{E}}
\newcommand{\Sym}{\mathfrak{S}}
\newcommand{\T}{\mathrm{T}}
\newcommand{\Fp}{\mathbb{F}_p}
\newcommand{\fpb}{\overline{\mathbb{F}}_p}

\newcommand{\DD}{\mathcal{D}}
\newcommand{\HH}{\mathrm{H}}

\newcommand{\Po}{\mathbb{P}^1}

\newtheorem{innercustomthm}{Theorem}
\newenvironment{customthm}[1]
  {\renewcommand\theinnercustomthm{#1}\innercustomthm}
  {\endinnercustomthm}

\newtheorem{innercustomcor}{Corollary}
\newenvironment{customcor}[1]
  {\renewcommand\theinnercustomcor{#1}\innercustomcor}
  {\endinnercustomcor}

\usepackage{enumitem,kantlipsum}

\usepackage{amsmath}
\usepackage{kbordermatrix}
\renewcommand{\kbldelim}{(}
\renewcommand{\kbrdelim}{)}

\begin{document}
\title{Density of $\mu$-ordinary Primes for K3 Surfaces}

\author{Trung Can$^{1}$, Yuxin Lin$^{2}$}

\email{ctttrung@hcmus.edu.vn}
\email{bennylin@umich.edu}

\address{[1] University of Science, Vietnam National University, Ho Chi Minh City}

\address{[2] University of Michigan, Ann Arbor}
\date{}
\maketitle

\begin{abstract}
By a result of Bogomolov and Zarhin, a K3 surface $X$ over a number field $L$ has ordinary reduction at a density 1 set of primes after passing to a finite extension of $L$. In this paper, we refine this result for non-CM K3 surfaces whose transcendental Hodge structure has endomorphism field $F$ abelian over $\mathbb{Q}$. We further assume a condition on the connected components of the $l$-adic monodromy group of $X$, and, when $F$ is totally real, a parity condition on the rank of the transcendental lattice over $F$. Under these assumptions, we prove that the set of primes of $L$ at which $X$ has
$\mu$-ordinary reduction has density $1$. As a corollary, the set of primes of $L$ at which $X$ has ordinary reduction has density $1/[FL:L]$. We include explicit examples of K3 surfaces satisfying these conditions.
\end{abstract}

\tableofcontents

\section{Introduction} 
\subsection{Density of ordinary primes}
Let $A$ be an abelian variety over a number field $L$. For all but finitely
many primes $\mathfrak{p}$ of $L$, the variety $A$ has good reduction at
$\mathfrak{p}$. The reduction $A_{\mathfrak{p}}$ is called \emph{ordinary} if
its Newton polygon has only the slopes $0$ and $1$. A conjecture generally attributed to Serre asserts that, after passing to a suitable
finite extension of $L$, the set of primes at which $A$ has ordinary reduction has
density~$1$.\footnote{Throughout, density means natural density.} The
conjecture is known for elliptic curves (Serre \cite{Serre1989}) and for abelian surfaces (Ogus \cite{Ogus1982}), but it remains open in general.

The same question can be asked for other classes of varieties. Let $X$ be a K3
surface over $L$ and let $\mathfrak{p}$ be a prime of good reduction. The
reduction $X_{\mathfrak{p}}$ is ordinary if its Newton polygon has slopes $0,1,2$. Bogomolov and Zarhin
\cite{bogomolovzarhin} proved the analog of Serre's conjecture for K3
surfaces: after a finite extension of $L$, the surface $X$ has ordinary
reduction at a set of primes of density~$1$.

Neither statement says which extension is needed or what the density of
ordinary primes is over $L$ itself. For an elliptic curve $E$, the density is
$1$ if all endomorphisms of $E_{\overline{L}}$ are defined over $L$, and $1/2$
otherwise. Sawin \cite{SAWIN2016} expressed the density of ordinary primes of
an abelian surface over $L$ in terms of the component group of its $l$-adic
monodromy group and showed that it always equals $1$, $1/2$, or $1/4$.
Cantoral-Farf\'an et al \cite{serre2} considered
absolutely simple abelian varieties whose geometric endomorphism algebra is a
CM field $F$ acting with simple signature. When $F/\mathbb{Q}$ is Galois, they compute the density of ordinary primes over $L$ explicitly. When $F/\mathbb{Q}$ is abelian, they show that the reduction is $\mu$-ordinary at a density 1 set of primes of $L$. In this paper, we prove results of the same kind for K3 surfaces.

\subsection{$\mu$-ordinary reduction and main results}
Let $X$ be a K3 surface over $L$ and fix an embedding
$L \hookrightarrow \mathbb{C}$. Let
$T(X)_{\mathbb{Q}} \subset H^2(X_{\mathbb{C}}, \mathbb{Q})$ denote the
transcendental part of the cohomology, and let
\[
  F := \End_{\mathrm{Hdg}}\bigl(T(X)_{\mathbb{Q}}\bigr)
\]
be its algebra of Hodge endomorphisms. By a theorem of Zarhin
\cite{Zarhin1983}, $F$ is a number field, and it is either totally real or CM.

Let $\mathfrak{p}$ be a prime of $L$ of good reduction, with residue
characteristic $p$. The action of $F$ constrains the Newton polygon of
$X_{\mathfrak{p}}$ in a way that depends on how $p$ decomposes in $F$. The
lowest Newton polygon compatible with this constraint is called the
\emph{$\mu$-ordinary} Newton polygon. For K3 surfaces it is the polygon of
height $d$, with slopes
$\bigl(1-\tfrac{1}{d},\, 1,\, 1+\tfrac{1}{d}\bigr)$,
where the integer $d$ is determined by the splitting of $p$ in
$F$ (see Section~\ref{sec:preliminaries}). In particular, if $p$ splits
completely in $F$, then $d = 1$ and the $\mu$-ordinary polygon
is the ordinary one.

At primes where the $\mu$-ordinary polygon is not ordinary, ordinary reduction
cannot occur, so $\mu$-ordinary reduction is the best one can hope for. This
leads to a refinement of Serre's question: does $X$ have $\mu$-ordinary
reduction at a density 1 set of primes of $L$, without any extension of the
base field?

Let $\Lconn$ denote the smallest extension of $L$ over which the
$l$-adic monodromy group $G_{X,l}$ of $X$ becomes connected. Our main result
answers the refined question under the following assumptions.

\begin{theorem}\label{Thm: theorem 2}
Let $X$ be a K3 surface over a number field $L$ without complex
multiplication, and let $F = \End_{\mathrm{Hdg}}(T(X)_{\mathbb{Q}})$. Assume
that
\begin{enumerate}
  \item $F/\mathbb{Q}$ is an abelian extension;
  \item $\Lconn \subseteq FL$;
  \item if $F$ is totally real, then $\dim_F T(X)_{\mathbb{Q}}$ is even.
\end{enumerate}
Then the set of primes of $L$ at which $X$ has $\mu$-ordinary reduction has
density~$1$.
\end{theorem}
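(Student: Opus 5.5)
We follow the strategy of Cantoral-Farf\'an et al.\ for abelian varieties, transported to K3 surfaces through the Kuga--Satake construction. We use Frobenius conjugacy classes in the $l$-adic monodromy group $G_{X,l}$ together with a Chebotarev-type density argument in the style of Noot and Bogomolov--Zarhin.

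\emph{Step 1: reduce to primes splitting well in $FL$.} By assumption (1), $F/\mathbb{Q}$ is abelian, so for a prime $\mathfrak{p}$ of $L$ over $p$ the decomposition of $p$ in $F$ is described by the Frobenius element $\sigma_p\in\Gal(F/\mathbb{Q})$. Up to a density-zero set of primes (ramified primes, and primes of degree $>1$ over $\mathbb{Q}$), the integer $d$ governing the $\mu$-ordinary polygon equals the order of $\sigma_p$, which is the order of $\Fr_\mathfrak{p}$ in $\Gal(FL/L)\subseteq\Gal(F/\mathbb{Q})$. We partition the primes of $L$ according to the image of $\Fr_\mathfrak{p}$ in $\Gal(FL/L)$. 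By Chebotarev, each class has positive density. It therefore suffices to show that within each class, density-one many primes are $\mu$-ordinary.

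\emph{Step 2: describe the monodromy group.} For non-CM $X$, Zarhin and Tankeev show that the connected component $G_{X,l}^\circ$ is, up to scalars, the Weil restriction $\mathrm{Res}_{F/\mathbb{Q}}\SO(T,\phi)$ when $F$ is totally real, or $\mathrm{Res}_{F_0/\mathbb{Q}}U(T,\phi)$ when $F$ is CM. By assumption (2), $\Lconn\subseteq FL$, so the component group of $G_{X,l}$ is a quotient of $\Gal(FL/L)$. Its nonidentity components act on $F\otimes\mathbb{Q}_l$ by permuting the embeddings through $\sigma$. For each $\sigma$ we consider the coset $G_\sigma$. The key claim is that the set of elements $g\in G_\sigma$ whose characteristic polynomial on $T\otimes\mathbb{Q}_l$ has the ``$\mu$-ordinary shape'' is Zariski open and dense in $G_\sigma$. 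Here ``$\mu$-ordinary shape'' means the $p$-adic valuations of its roots realize the polygon with slopes $1-\tfrac1d,\,1,\,1+\tfrac1d$. Equivalently, the complement is a proper closed subvariety, cut out for instance by the vanishing of suitable coefficients of $\det(1-gt)$ restricted to the $\sigma$-twisted torus. To see that it is proper, we write $g=h\sigma$ and compute $g^{d}$, which lies in $G^\circ$ with eigenvalues given by products over $\sigma$-orbits of embeddings. We then exhibit one element of the coset with the correct eigenvalue pattern. The parity assumption (3) enters at this point. In the totally real case the $\sigma$-twisted orthogonal group must contain an element whose norm-type product has an eigenvalue of the right multiplicity, namely a hyperbolic pair over each orbit. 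This requires $\dim_F T$ to be even, since otherwise a forced eigenvalue $\pm1$ pushes the polygon off the $\mu$-ordinary one.

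\emph{Step 3: pass from a dense open set to density one.} This step is the main obstacle. Knowing that the bad locus is a proper closed subvariety of $G_\sigma$ does not by itself give density zero, because Frobenius elements are not equidistributed in the Zariski sense. We will therefore use the argument of Bogomolov--Zarhin/Noot. The characteristic polynomial of $\Fr_\mathfrak{p}$ has coefficients in a fixed field, and non-$\mu$-ordinarity forces $p$ to divide a specific coefficient, which is a nonzero algebraic function on $G_\sigma$. By the Serre/Katz--Chebotarev density argument applied to the images mod $l^n$, or alternatively via reduction to Kuga--Satake abelian varieties and the result of Cantoral-Farf\'an et al., the set of such $\mathfrak{p}$ has density zero. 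We expect to follow that paper's proof, which treats the CM-signature abelian case, replacing their unitary group by our $\SO/U$ Weil restriction. Summing over the finitely many $\sigma$ then gives density~$1$.
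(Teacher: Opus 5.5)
\textbf{There is a genuine gap.} Your frame is right: split the primes of $L$ by $\Fr_{\mathfrak{p}}|_{FL}$ and apply the $l$-adic Chebotarev theorem on each component of $G_{X,l}$. But the step that actually produces density zero is missing, and you place the difficulty in the wrong spot.

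\emph{Step 2 is not well posed.} The ``key claim'' asks for the $p$-adic valuations of the eigenvalues of $g\in G_\sigma(\mathbb{Q}_l)$, which are not defined. Moreover $p$ varies with $\mathfrak{p}$, so there is no single Zariski-open ``$\mu$-ordinary locus'' in $G_\sigma$.

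\emph{Step 3 misidentifies the obstacle.} Suppose all non-$\mu$-ordinary Frobenii in the class $\sigma$ lay in one fixed, $p$-independent, conjugation-invariant proper Zariski-closed subset of the irreducible component $G_{X,l}^{(\sigma)}$. Then Serre's $l$-adic Chebotarev theorem would immediately give density zero, because such a set meets the Zariski-dense coset $\rho_{X,l}(\widetilde\sigma)\rho_{X,l}(\Gal(\overline L/FL))$ in a Haar-null set. The whole difficulty is producing such a $p$-independent set. ``$p$ divides a specific coefficient'' does not do this: divisibility by a varying $p$ is not an algebraic condition on $G_\sigma$, and a merely ``nonzero'' function is not what is needed.

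\emph{The missing idea} is the normalized trace $a_{\mathfrak{p}}=\Tr(\Fr_{\mathfrak{p}}\mid\wedge^d H^2\otimes\chi^{-d})$.
\begin{itemize}
\item At a non-$\mu$-ordinary prime every slope exceeds $1-\frac1d$. So each $d$-fold product of Frobenius eigenvalues has valuation $>d-1$, and $p^d$ divides the integer $\Tr(\Fr_{\mathfrak{p}}\mid\wedge^d H^2)$.
\item Combined with the Weil bound, this puts $a_{\mathfrak{p}}$ in the finite set $[-\binom{22}{d},\binom{22}{d}]\cap\mathbb{Z}$, independent of $p$.
\item Hence the bad Frobenii lie in finitely many level sets of the algebraic class function $b_\sigma$. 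It then suffices to show that $b_\sigma$ is \emph{nonconstant} on each component.
\end{itemize}
That nonconstancy is the real content, and your Step 2 only gestures at it. The paper proceeds as follows.
\begin{itemize}
\item It moves a representative of $G_{X,l}^{(\sigma)}$ to the form $P_\sigma T_\sigma$, using $N_G(G_F)/G_F\simeq\{\pm1\}^m\rtimes\mathfrak{S}_m$. Here $P_\sigma$ is a signed permutation permuting the $V_l^\tau$ in $d$-cycles.
\item Newton's identities then give $\Tr(M\mid\wedge^d H^2)=k_\sigma+\frac{(-1)^{d-1}}{d}\Tr(M^d\mid V_l)$.
\item Finally $M$ is varied in a one-parameter family inside $G_F$. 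When $\iota(\sigma)^{d/2}$ is complex conjugation this family must be unipotent, since torus families make $\Tr(M^d\mid V_l)$ constant.
\end{itemize}

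\emph{Your fallbacks do not substitute for this.} The Kuga--Satake variety has an endomorphism algebra containing the noncommutative even Clifford algebra of $T(X)_{\mathbb{Q}}$. So the result of Cantoral-Farf\'an et al., which assumes a CM endomorphism field acting with simple signature, does not apply to it. Transferring $\mu$-ordinarity from that variety back to $X$ would also need its own proof.

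\emph{Your account of the parity hypothesis is incorrect.} A forced eigenvalue $\pm1$ from an odd orthogonal factor corresponds to a Frobenius eigenvalue of slope $1$, and the $\mu$-ordinary polygon contains many such slopes. In the paper, parity is used so that $G_F\simeq\prod\SO_{2n}$ and the torus $T$ is maximal in the ambient orthogonal group. The Weyl-group description of the components relies on this.
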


The $\mu$-ordinary and ordinary Newton polygons coincide exactly at the primes
of $L$ lying over rational primes that split completely in $F$. Hence
Theorem~\ref{Thm: theorem 2} determines the density of ordinary primes over
$L$.

\begin{corollary}\label{cor:1}
Under the assumptions of Theorem~\ref{Thm: theorem 2}, the set of primes of
$L$ at which $X$ has ordinary reduction has density $1/[FL:L]$.
\end{corollary}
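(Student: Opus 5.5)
The corollary should follow from Theorem~\ref{Thm: theorem 2} together with a Chebotarev count. Ordinary reduction at $\pp$ forces the Newton polygon to be ordinary. By the constraint from the $F$-action described in Section~\ref{sec:preliminaries}, the $\mu$-ordinary polygon at $\pp$ lies on or above every polygon allowed at $\pp$. So if $p$ does not split completely in $F$, the $\mu$-ordinary polygon has $d>1$ and is strictly above the ordinary one. Hence ordinary reduction at $\pp$ is impossible unless $p$ splits completely in $F$. Conversely, when $p$ splits completely in $F$ we have $d=1$, so $\mu$-ordinary reduction and ordinary reduction coincide at $\pp$.

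I therefore write
\[
\{\pp : X_\pp \text{ ordinary}\} = \{\pp : X_\pp\ \mu\text{-ordinary}\} \cap S,
\]
where $S$ is the set of primes $\pp$ of $L$ whose residue characteristic $p$ splits completely in $F$. By Theorem~\ref{Thm: theorem 2}, the first set has density $1$. Intersecting a density-one set with $S$ does not change the density of $S$, provided $S$ has a density; for natural density this holds because the complement of the first set has density $0$. So it remains to show that $S$ has density $1/[FL:L]$.

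The key step, and the only one needing care, is identifying $S$ up to a density-zero set. Primes of $L$ of degree greater than $1$ over $\Q$ have density $0$, as do the finitely many ramified primes, so I may restrict to unramified $\pp$ with residue degree $f(\pp/p)=1$. I claim that for such $\pp$, the prime $p$ splits completely in $F$ if and only if $\pp$ splits completely in $FL$. The proof uses that $F/\Q$ is Galois, indeed abelian. The Frobenius of $\pp$ in $\Gal(FL/L)\cong\Gal(F/F\cap L)\subseteq\Gal(F/\Q)$ maps to $\Fr_p^{f}=\Fr_p$ when $f=1$. So $\pp$ splits completely in $FL$ exactly when $\Fr_p$ is trivial in $\Gal(F/\Q)$.

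By Chebotarev applied to the Galois extension $FL/L$, the primes of $L$ splitting completely in $FL$ have density $1/[FL:L]$. This gives the density of $S$ and hence the corollary.
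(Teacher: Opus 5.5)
Your proposal is correct and follows the paper's route: on good degree-one primes, ordinary reduction is exactly $\mu$-ordinary reduction at a prime with $p$ split completely in $F$, so Theorem~\ref{Thm: theorem 2} together with Chebotarev in $FL/L$ gives $1/[FL:L]$. Your density-zero bookkeeping (intersecting with a density-one set, discarding higher-degree primes, using $\Fr_{\pp}|_F=\Fr_{p}^{f}$) is more careful than the paper's one-line proof, which asserts the set equality outright; one wording fix is needed, though: since $\mu_{\pp}$ is the \emph{lowest} allowed polygon, you need that every polygon allowed at $\pp$ lies on or above $\mu_{\pp}$, not that $\mu_{\pp}$ lies on or above every allowed polygon.
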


Corollary~\ref{cor:1} refines the theorem of Bogomolov and Zarhin in this
setting. It identifies $FL$ as an extension over which ordinary reduction
occurs at a density 1 set of primes, and it computes the density of ordinary
primes over $L$ itself.

In \cite{serre2}, the endomorphism field is assumed to be CM. In contrast,
Theorem~\ref{Thm: theorem 2} also covers the case where $F$ is totally real,
provided that $\dim_F T(X)_{\mathbb{Q}}$ is even.

For K3 surfaces, Sawin has proved in an unpublished
manuscript that $\mu$-ordinary reduction occurs at a set of primes of
density~$1$. The present work was carried out independently. Our argument
is accessible and self-contained; in particular, we computed the
relevant trace function explicitly on the relevant orthogonal groups; we learned of
Sawin's manuscript only after our results were obtained. As described in \cite[Remark~4.20]{serre2}, Sawin's argument applies to
arbitrary K3 surfaces over number fields and to Kuga--Satake abelian
varieties. 


\subsection{Strategy of the proof}
We follow the $\ell$-adic method of Serre, Katz--Ogus, Sawin, in the form developed by Cantoral-Farfán et al 
\cite{serre2}, and adapt it to K3 surfaces. The proof has three key steps.
\begin{enumerate}
  \item We construct a conjugacy-invariant algebraic function on $G_{X,l}$
  that detects $\mu$-ordinary reduction. Combined with the Weil bounds and
  Serre's Chebotarev density theorem, this reduces
  the density question to showing that the function is non-constant on every
  connected component of $G_{X,l}$.
  \item Using the assumptions that $F/\mathbb{Q}$ is abelian and
  $\Lconn \subseteq FL$, we realize the component group of
  $G_{X,l}$ as a subgroup of the Weyl group of the orthogonal group.
  \item An explicit matrix computation then shows that the function is
  non-constant on each connected component.
\end{enumerate}

\subsection{Organization of the paper}
Section~\ref{sec:preliminaries} collects background and notation on
$\mu$-ordinary Newton polygons, $l$-adic monodromy groups and
Mumford--Tate groups of K3 surfaces. In Section~\ref{refinement} we
construct the invariant that detects $\mu$-ordinary reduction and establish
its properties. Section~\ref{sec: orthogonal group} contains the proof of
Theorem~\ref{Thm: theorem 2}. Finally, in Example~\ref{ex: explicit K3 family}
we construct explicit families of K3 surfaces to which
Theorem~\ref{Thm: theorem 2} applies.

\section{Preliminaries} \label{sec:preliminaries}

\subsection{K3 Hodge structure and orthogonal Shimura datum}
Let $X$ be a K3 surface over a number field $L$. Let $\overline{L}$ denote an
algebraic closure of $L$. Fix an embedding \(L\hookrightarrow \mathbb C\).
The cohomology group
$H^2(X_{\mathbb C},\mathbb \Z)$
is of rank $22$ over $\Z$ with symmetric
bilinear form \(\Psi\) induced by Poincar\'e
duality. Let $\text{NS}(X_{\mathbb C})$
be the N\'eron--Severi group of $X_{\C}$ and define the
transcendental part by
\[
T(X):=\NS(X_{\mathbb C})^{\perp}
\subset H^2(X_{\mathbb C}, \Z),
\]
where the orthogonal complement is taken with respect to \(\Psi\). 

Let $V:=T(X)_{\mathbb Q}$, then $\Psi$ restricts to a symmetric bilinear form on $V$. The Hodge decomposition on \(H^2(X_{\mathbb C},\mathbb Q)\) induces a rational
Hodge structure on \(V\). We define $\EE(X)$ to be the endomorphism algebra of $T(X)_{\Q}$ preserving this Hodge structure. That is,
\[
\EE(X):=\operatorname{End}_{\mathrm{Hdg}}(T(X)_{\mathbb Q})
\]


Let $\mathbb S=\operatorname{Res}_{\mathbb C/\mathbb R}\mathbb G_m$ be the Deligne torus. Let $h: \mathbb{S} \to \GL(V_{\R})$ be the structure homomorphism of the rational polarized K3 Hodge structure on $(V,\Psi)$. Since the Hodge structure is polarized by \(\Psi\), the morphism \(h\) factors
through the group of orthogonal similitude $\GO(V,\Psi)_{\mathbb R}$. The triple $(V,\Psi,h)$ determines an orthogonal Shimura datum
$\DD=(\GO(V,\Psi),\mathcal {X})$
where $\mathcal{X}$ is the $\GO(V,\Psi)(\mathbb R)$-conjugacy class of $h$. 

\begin{assumption}\label{Assumption: A1 and A2}
Throughout the paper, we assume that:

\begin{enumerate}
    \item[(A1)] $F=\mathcal{E}(X)$ is isomorphic to an abelian field extension of $\Q$;
    \item[(A2)] $X$ does not have complex multiplication, and if $F$ is totally real, then we also assume that $\dim_F(T(X)_{\Q})$ is even.
\end{enumerate}
\end{assumption}
\begin{notation}\label{notation: m and n}
Given $X$ satisfying $(A1)$ and $(A2)$:
\begin{itemize}
\item Let $*$ denote the complex conjugation on $F$ and let $F_0$ be the fixed subfield of $F$ under $*$. Notice that if $F$ is totally real, then $*$ is the identity and $F_0=F$.
\item Let $m=[F_0:\Q]$ and $$
n=
\begin{cases}
\dim_F(T(X)_\Q), & \text{if $F$ is CM},\\[4pt]
\frac{1}{2}\dim_F(T(X)_\Q), & \text{if $F$ is totally real}.
\end{cases}
$$
\end{itemize}
Notice that $2mn=\text{rank}_\Z (T(X))$.
\end{notation}

Now let $F=\EE(X)$. Then \(F\) acts on \(V\), and we define the
\(F\)-linear orthogonal similitude group by
\[\GO_F(V,\Psi):=\operatorname{Cent}_{\GO(V,\Psi)}(F)\]
Equivalently, for every \(\mathbb Q\)-algebra \(R\),
\[
\GO_F(V,\Psi)(R)
=
\left\{
g\in \GL_{F\otimes_{\mathbb Q}R}(V\otimes_{\mathbb Q}R)
\ \middle|\ 
\exists c(g)\in R^\times
\text{ such that }
\Psi(gu,gv)=c(g)\Psi(u,v)
\text{ for all }u,v
\right\}.
\]

Let $\Sh(\DD)$ be the orthogonal Shimura variety attached to Shimura datum $\DD$. Under the Assumption \ref{Assumption: A1 and A2}, the reflex field $E$ is contained in $FL$. Since $\Sh(\DD)$ has a canonical model over its reflex field $E$, it is defined over $FL$. Hence, for every prime $\pp \in FL$, we can consider the reduction of $\Sh(\DD)$ at $\pp$.

In this paper, we focus on the set of primes in $L$ with the following properties:
\begin{definition}[The set $S$]\label{def: good primes}
Given $X/L$ a K3 surface, let $S$ be the set of primes $\pp$ in $L$ satisfying the following:
\begin{itemize}
    \item $X$ has good reduction at $\mathfrak{p}$;
    \item Let $p=\pp \cap \Z$. Then $p$ is unramified in $F/\Q$
    \item $\mathfrak{p}$ has degree 1.
\end{itemize}
\end{definition}

Now, let $p$ be a rational prime that is unramified in $F$. Since $F/\Q$ is abelian, we have a well-defined Frobenius element $\Fr_p$ in $\text{Gal}(F/\Q)$. 
\begin{notation}\label{notation: d and k}
Given such a prime $p$, we set:
\begin{itemize}
\item Let $d$ be the order of $\Fr_p$ in $\Gal(F/\Q)$. In particular, $d \mid [F:\Q]$.
\item Let $K$ be the fixed field of $F$ under $\Fr_p$. That is, $K=F^{\langle \Fr_p \rangle}$. In particular, $K$ is the largest subfield of $F$ for which $p$ splits completely.
\end{itemize}
Notice that $d=[F:K]$.
\end{notation}

\begin{definition}[Natural density]\label{def: density of primes}
For a number field $L/\Q$, let $A$,$B$ be sets of primes in $L$.
Then, we define the natural density of set $A$ in $B$ as follows:
\[\rho(A:B)=\lim_{x \to \infty} \frac{ \#\{\pp \in A, N^L_{\Q}\pp \leq x\}}{\# \{\pp \in B, N^L_{\Q}\pp \leq x\}}\]
In particular, if $B=\{\pp \in \text{Spec}(\mathcal{O}_L)\}$, then we define the density of $A$ as:
\[\rho(A):=\rho(A:B)\]
\end{definition}
Since primes of degree one have density $1$, the set $S$ has density $1$ in $L$.

For $\pp \in S$, let $p=\pp \cap \Z$. Then, $p$ is unramified in $E$ and hence is a prime of good reduction for $\Sh(\DD)$. Hence, we can consider the special fibre at $p$ given by $\Sh(\DD)_{\fpb}$.

\subsection{The $\mu$-ordinary Newton polygon of K3 surface with additional structure}\label{subsec: mu-ord NP}

Let $X/L$ be a $\text{K3}$ surface satisfying $(A1)$ and $(A2)$. Let $\pp \in S$ be a good prime for $X$ and let $p=\pp \cap \Z$. Let $\mathbb{F}_{\pp}$ be the residue field of $\pp$. By assumption on $S$, we have that $\mathbb{F}_{\pp}=\Fp$. Let $X_{\pp}$ be the reduction of $X$ mod $\pp$. Let $W(\Fp) \cong \Z_{p}$ be the ring of Witt vectors on $\Fp$.

Let $\varphi$ denote the Frobenius action on the crystalline
cohomology group $H^2_{cris}(X_{\pp}/W(\Fp))$. Then, the Newton polygon $\nu_{\pp}(X)$ is the multiset of the p-adic valuations of the eigenvalues of $\varphi$. It can equivalently be computed as the $p$-adic valuation of the eigenvalues of Frobenius acting on $H^2(X_{\overline{L}},\Q_l)$ for $l \neq p$. Given a Newton polygon $\nu_{\pp}$, we call $\lambda \in \nu_{\pp}$ a \textbf{slope} and the number of times $\lambda$ appears in $\nu_{\pp}$ its \textbf{multiplicity}.

When $X/L$ is a K3 surface, since $H^2_{cris}(X_{\pp}/W(\Fp))$ has rank $22$ over $W(\Fp)$, $\nu_{\pp}(X)$ has length $22$. Moreover, by \cite{liedtkelectures}, there exists an integer $h:=h(X) \in \{1,2,\dots,10\}$ such that $\nu_{\pp}(X)$ has slopes $(1-1/h, 1, 1+1/h)$
with multiplicities $(h,22-2h,h)$, or $\nu_{\pp}(X)$ has slope $1$ with multiplicity $22$. 

Recall that we can construct an orthogonal Shimura variety $\Sh(\DD)$
from $X$ with good reduction at $\pp\in S$. In
\cite[Theorem 4.2]{rapoportrichartz}, Rapoport and Richartz described
the lowest Newton polygon occurring in the special fibre
$\Sh(\DD)_{\fpb}$ in terms of the splitting behavior of $p$ in
$F/\Q$. In our case, $\Sh(\DD)$ parametrizes K3 Hodge structures on
$T(X)_{\Q}$, which has rank $2mn$. Hence the Newton polygons occurring
on the special fibre $\Sh(\DD)_{\fpb}$ record the Frobenius slopes
coming from the transcendental part of $H^2(X,\Z)$. The
N\'eron--Severi part has rank $22-2mn$ and contributes only slope $1$.
Therefore, for each Newton polygon occurring on
$\Sh(\DD)_{\fpb}$, we adjoin $22-2mn$ copies of slope $1$.

We define $\mu_{\pp}$ to be the lowest length-$22$ Newton polygon
obtained in this way. Thus, $\mu_{\pp}$ is the lowest Newton polygon
that can occur for a K3 surface with the prescribed additional
$F$-structure.

\begin{proposition}[$\mu$-ordinary Newton polygon for $K3$ surface] \label{prop: ord-slope}
Let $X/L$ be a K3 surface satisfying the assumptions (A1) and (A2).
Let $\pp$ be a prime in $S$ and $p=\pp \cap \Z$. Let $d$ be as defined in Notation \ref{notation: d and k}. 

Then, 
$\mu_{\mathfrak{p}}$ has slopes $1 -\frac{1}{d}, 1, 1+\frac{1}{d}$ with multiplicities $d, 22-2d, d$.
\end{proposition}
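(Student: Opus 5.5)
Since the Néron--Severi part contributes only slope $1$ with multiplicity $22-2mn$, it suffices to determine the lowest Newton polygon on $\Sh(\DD)_{\fpb}$ for the rank-$2mn$ space $V=T(X)_{\Q}$ with its $F$-action, and then adjoin $22-2mn$ copies of slope $1$. The plan is to compute this polygon from the $\mu$-ordinary recipe of \cite[Theorem 4.2]{rapoportrichartz}: the $\mu$-ordinary polygon equals the Galois average of the Hodge cocharacter $\mu$ over the decomposition group at $p$. Concretely, $V\otimes\Q_p=\bigoplus_{\tau}V_\tau$, where $\tau$ runs over the embeddings $F\hookrightarrow\overline{\Q}_p$. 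Frobenius $\Fr_p$ permutes the $\tau$ in orbits of length $d$ (Notation \ref{notation: d and k}), and the slopes on the part indexed by an orbit are the averages over that orbit of the Hodge weights of $\mu$ on each $V_\tau$.

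The first step is to pin down the Hodge cocharacter on each $V_\tau$. Because $V^{2,0}$ is one-dimensional and $F$ acts on it through a single embedding $\sigma_0:F\to\C$ (Zarhin), exactly one $V_\tau$ contains the weight-$2$ (slope $2$ after twisting) line. Its complex conjugate $V_{\bar\tau}$ carries the $(0,2)$ line; here $\bar\tau=\tau$ when $F$ is totally real. Every other $V_\tau$ is pure of type $(1,1)$. In the totally real case $V_{\tau_0}$ has dimension $2n$ and contains one line of weight $2$, one of weight $0$, and $2n-2$ of weight $1$. In the CM case $V_{\tau_0}$ has weight $2$ on one line and $1$ on the rest, while $V_{\bar\tau_0}$ has weight $0$ on one line and $1$ on the rest.

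The second step is to average over the Frobenius orbits. The orbit containing $\tau_0$ has length $d$, and the $\mu$-ordinary polygon on this orbit's block should be computed as in Rapoport--Richartz. The Frobenius-semilinear structure of rank $d\cdot\dim V_\tau$ on the block has the weight-$2$ contribution spread over a single line-chain of length $d$, which gives slope $1+1/d$ with multiplicity $d$. Dually, by the pairing $\Psi$, slope $1-1/d$ appears with multiplicity $d$. In the CM case, $\bar\tau_0$ lies either in a different orbit or in the same orbit, and both cases must be handled. All remaining directions have slope $1$.

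The third step is the one I expect to be the main obstacle. A naive average of $\mu$ over the orbit gives only a single averaged slope per $\tau$-line, and this does not match the claimed answer directly. One must use the correct description, in which the $\mu$-ordinary polygon for a unitary or orthogonal group in \cite{rapoportrichartz} is the average of the dominant cocharacter over the orbit, taken coordinatewise after ordering. For the orthogonal case the relevant group is $\mathrm{Res}_{F_0/\Q}$ of an orthogonal or unitary group. I would verify the CM case with $\bar\tau_0$ in the same orbit, where $d$ is even and complex conjugation equals $\Fr_p^{d/2}$. Here I would check carefully that the ordered average yields $(1+1/d)$ repeated $d$ times rather than slope $2$ diluted across a larger block. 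This is where the evenness hypothesis in (A2) may enter, by guaranteeing the existence of a suitable isotropic or hyperbolic structure so that the minimal polygon is actually attained. Finally, I would check that the total multiplicities are $d,\ 22-2d,\ d$, which is consistent with $d\le 10$ since $2d\le 2mn\le 21$.
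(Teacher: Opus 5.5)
Your route is the paper's. You decompose $V$ by the embeddings $\tau$ of $F$ and use the Rapoport--Richartz description of the lowest polygon. In your third paragraph you arrive at exactly the rule the paper applies (the weight-two analogue of Li's Proposition~4.3): order the Hodge slopes of each $V_\tau$ as $a_{\tau,1}\le\cdots\le a_{\tau,\dim V_\tau}$, and let the $j$-th slope of a Frobenius orbit $\mathfrak{o}_\tau$ be $\lambda_{\mathfrak{o},j}=\frac1d\sum_{i=0}^{d-1}a_{\sigma_p^i\circ\tau,j}$, each with multiplicity $d$. The proposal then stops, but carrying out this computation \emph{is} the proof. The line-chain and duality heuristic of your second paragraph does not replace it, because it glosses over the one place where the answer can fail. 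You also attribute that failure point to the wrong hypothesis.

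The delicate case is the one you flagged: $F$ is CM and $\tau_0^*\in\mathfrak{o}_{\tau_0}$, so $d$ is even and $\Fr_p^{d/2}$ is complex conjugation. The orbit then contains the lists $(0,1,\ldots,1)$ and $(1,\ldots,1,2)$ of length $n=\dim_F V$, together with $d-2$ lists $(1,\ldots,1)$. The $0$ sits in coordinate $1$ and the $2$ in coordinate $n$. Hence $\lambda_1=1-\frac1d$, $\lambda_n=1+\frac1d$, and all other slopes are $1$, \emph{provided $n\ge2$}. The danger is not dilution but cancellation. If $n=1$, the $0$ and the $2$ land in the same coordinate and average to $1$, giving the supersingular polygon, as happens for a CM K3 surface at such primes. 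So the hypothesis doing the work is that $X$ is non-CM, which gives $n\ge 2$; the paper invokes ``$n\ge2$ by (A2)'' precisely to separate the positions of $0$ and $2$. The evenness condition is not what matters here. It concerns only totally real $F$, where $\tau_0^*=\tau_0$ and the $0$ and $2$ already occupy the first and last coordinates of the single list $(0,1,\ldots,1,2)$ for $V_{\tau_0}$. Your idea that evenness supplies an isotropic or hyperbolic structure guaranteeing attainment plays no role in this argument.

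The remaining cases ($F$ totally real, or CM with $\tau_0^*\notin\mathfrak{o}_{\tau_0}$) follow immediately from the rule. The orbit or orbits containing the $0$ and the $2$ contribute $1-\frac1d$ and $1+\frac1d$, each with multiplicity $d$, and all other slopes are $1$. One then adjoins the $22-2mn$ slopes $1$ from the N\'eron--Severi part. Note also that your closing inequality $2d\le 2mn$ needs $n\ge2$ when $d=2m$.
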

Notice that if $d=1$, equivalently if $p$ splits completely in $F$, then the $\mu$-ordinary Newton polygon coincides with the ordinary Newton polygon. 

\begin{proof}
Recall that $V=\T(X)_{\Q}$ and $F$ acts on $V$. Moreover, $\Fr_p$ has order $d$ in $\Gal(F/\Q)$. Hence, $\Hom(F,\C)$ splits into $[F:\Q]/d$ orbits, each of
cardinality $d$. Denote $\Fr_p$ by $\sigma_p$ and  $\tau \circ \sigma_p^{-1}$ by $\sigma_p \circ \tau$, the Frobenius orbit of $\tau \in \Hom(F,\C)$ is $\mathfrak{o}_{\tau}
=
\{\tau,\sigma_p \circ \tau,\ldots,\sigma_p^{d-1}\circ \tau\}$.

We can decompose $V_{\C}$ into $\tau$-eigenspaces:
\[V_{\C}=\bigoplus_{\tau \in \Hom(F,\C)}V_{\tau}\]

where
$\dim_{\C}V_{\tau}
=
\begin{cases}
n, & \text{if $F$ is CM},\\
2n, & \text{if $F$ is totally real}.
\end{cases}
$ On the other hand, we have the Hodge decomposition \[V_{\C}=V^{2,0}\oplus V^{1,1} \oplus V^{0,2}\]
By the analogue of \cite[Proposition 4.3]{li2019newton} for weight 2 Hodge structure, the slopes of the $\mu$-ordinary Newton polygons can be computed as follows. Write the Hodge slopes of $V_{\tau}$ in increasing order as $a_{\tau,1}\leq a_{\tau,2}\leq\cdots\leq a_{\tau,\dim(V_{\tau})}$, then the $j$-th $\mu$-ordinary Newton slope associated with
$\mathfrak{o}$ is
\[
\lambda_{\mathfrak{o},j}
=
\frac{1}{d}
\sum_{i=0}^{d-1}
a_{\sigma_p^i\circ\tau,j},
\qquad 1\leq j\leq \dim(V_{\tau}).
\]
And each slope $\lambda_{\mathfrak{o},j}$ occurs with multiplicity $d$.

We apply this formula to our setting. Let $\tau_0$ be the unique
embedding such that $V_{\tau_0}\cap V^{0,2}\neq 0$.
\begin{enumerate}
\item If $F$ is CM, then $\tau_0^*\neq\tau_0$. The
Hodge slopes of these two eigenspaces are
\[
V_{\tau_0}:\{0,1,\ldots,1\},
\qquad
V_{\tau_0^*}:\{1,\ldots,1,2\},
\]
while every other $V_{\tau}$ has Hodge slopes
$\{1,\ldots,1\}$.
\begin{itemize}
\item If $\tau_0^* \not \in \OO_{\tau_0}$. Then, the orbit $\mathfrak{o}_{\tau_0}$ contains one Hodge-slope
list $\{0,1,\ldots,1\}$ and $d-1$ lists $\{1,\ldots,1\}$. Hence
\[
\lambda_{\mathfrak{o}_{\tau_0},1}
=
\frac{0+(d-1)}{d}
=
1-\frac1d,
\]
and all the other slopes associated with $\mathfrak{o}_{\tau_0}$ are
equal to $1$. Similarly, the orbit $\mathfrak{o}_{\tau_0^*}$ contains
one list $\{1,\ldots,1,2\}$ and $d-1$ lists $\{1,\ldots,1\}$, so
\[
\lambda_{\mathfrak{o}_{\tau_0^*},n}
=
\frac{2+(d-1)}{d}
=
1+\frac1d,
\]
and all its other slopes are equal to $1$.

\item If $\tau_0^* \in \OO_{\tau_0}$. Then, this orbit contains the two lists
$\{0,1,\ldots,1\},
\{1,\ldots,1,2\}$
and $d-2$ lists $\{1,\ldots,1\}$. By the above computation, $\lambda_{\OO_{\tau_0},1}=1-\frac{1}{d}$,$\lambda_{\OO_{\tau_0},n}=1+\frac{1}{d}$, and all the other slopes are $1$.
\end{itemize}
Thus, in either case, the exceptional Newton slopes are $1-\frac1d$ and $1+\frac1d$
each with multiplicity $d$, while all remaining slopes are equal to
$1$.

\item Otherwise, we have
$\tau_0^*=\tau_0$, and the Hodge slopes of $V_{\tau_0}$ are $\{0,1,\ldots,1,2\}$, while every other eigenspace again has slopes $\{1,\ldots,1\}$. By Assumption $(A2)$, we have $n\geq2$, so the positions containing $0$ and $2$ are
distinct, so we have the same conclusion for the Newton slopes as in case 1.
\end{enumerate}
In summary, we see that the Newton polygon of $T(X)$ has slopes $1-\frac{1}{d},1,1+\frac{1}{d}$ with multiplicities $d,2mn-2d, d$. The $\NS(X)$ part contributes $22-2mn$ additional copies of
slope $1$. Therefore, the full $\mu$-ordinary Newton polygon has slopes $1-\frac{1}{d},1,1+\frac{1}{d}$ with multiplicities $d,22-2d, d$, as desired. 
\end{proof}

\begin{definition}\label{def: mu-ord prime}
Let $\pp \in S$. We say $\pp$ is a $\mu$-ordinary (resp. non-$\mu$-ordinary) prime if the Newton polygon of the reduction $X_{\pp}$ coincides with $\mu_{\pp}$ (resp. does not coincide with $\mu_{\pp}$). 
\end{definition}



\subsection{$l$-adic monodromy group and connected field}\label{subsec: l conn}

We adapt the setup from \cite[Section 2.3]{serre2} to fit the framework of our paper. 

Let $X/L$ be a K3 surface. We fix an embedding of $L$ into $\C$ and write $X_\C$ for the corresponding complex K3 surface. Let $X_{\overline{L}}$ denote the base change of $X$ to $\overline{L}$.
Fix a prime $l$ and consider the $l$-adic representation on the étale cohomology: 
$$\rho_{X,l}: \text{Gal}(\overline{L}/L) \to \GL(H^2(X_{\overline{L}},\Q_l)).$$
The $l$-adic monodromy group, denoted by $G_{X,l}$, is defined as the Zariski closure of the image of $\rho_{X,l}$ in $\GL(H^2(X_{\overline{L}},\Q_l))$. Let $G_{X,l}^\circ$ be the identity component of $G_{X,l}$ viewed as an algebraic group over $\Q_l$. Let $\pi_0(G_{X,l})=G_{X,l}/G_{X,l}^{\circ}$ be the group of connected components of $G_{X,l}$, which is a finite group. 
\begin{definition}\cite[Definition 2.4]{serre2}\label{def: Lconn}
$\Lconn$ is the field extension of $L$ such that $\rho^{-1}_{X,l}(G^{\circ}_{X,l}) = \Gal(\overline{L}
/\Lconn).$
\end{definition}
By construction, $\Lconn/L$ is finite Galois with $\Gal(\Lconn/L) \simeq \pi_0(G_{X,l})$. In particular, $\Lconn$ is independent of the choice of $l$.



We assume one further condition on $X/L$ in terms of $\Lconn$.
\begin{assumption}[A3]\label{Assumption: A3}
$\Lconn \subseteq FL$.
\end{assumption}
Under Assumptions $(A1)$ and $(A3)$, since $\Gal(FL/L) \xhookrightarrow{} \Gal(F/\Q)$ and $\Gal(FL/L)  \twoheadrightarrow  \Gal(\Lconn/L)$, we know that $\pi_0(G_{X,l})$ is a quotient of $\Gal(FL/L)$ which is abelian. Moreover, each connected component in $\pi_0(G_{X,l})$ can be represented by some $\sigma \in \Gal(FL/L)$.
\subsection{Mumford-Tate conjecture on K3 surface} Let $\MT_X$ be the Mumford--Tate group of $X$. Recall that
$\MT(V,\Psi)$ is the smallest algebraic subgroup of $\GO(V,\Psi)$
defined over $\Q$ whose real points contain the image of the Hodge
structure morphism $h:\mathbb S\longrightarrow \GO(V,\Psi)_{\R}$.

Recall that we have the decomposition 
$\HH^2(X,\Q) = \NS(X)_\Q \oplus \T(X)_\Q$ and we let $V=\T(X)_{\Q}$.
The action of \(\MT_X\) preserves both summands. On the
N\'eron--Severi part, it acts through the weight character, which is
already determined by its action on \(V\). Hence restriction to \(V\)
identifies $\MT_X$ with $\MT(V,\Psi)$.

From the definition of $\End_{\text{Hdg}}(T(X)_{\Q})=\mathcal{E}(X) = F$, we can further deduce that $\MT(V,\Psi) \subseteq \GO_F(V,\Psi)$. Since $\MT(V,\Psi)$ is connected, we know that in fact it is contained in the identity component $\GO_F(V,\Psi)^{\circ}$. 

We have the following result of Zarhin stating that this containment is actually an isomorphism.
\begin{theorem}[Zarhin, \cite{Zarhin1983}]
Let \((V,\Psi)\) be an irreducible polarized rational Hodge structure
of K3 type, and let $F=\End_{\mathrm{Hdg}}(V)$.
Then,
\[
\MT(V,\Psi)=\GO_F(V,\Psi)^{\circ}.
\]
\end{theorem}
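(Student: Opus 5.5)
The plan is to compare $\MT(V,\Psi)$ and $\GO_F(V,\Psi)^{\circ}$ after extending scalars to $\overline{\Q}$. We already know the inclusion $\MT(V,\Psi)\subseteq \GO_F(V,\Psi)^{\circ}$. Since both groups are connected, equality follows once their Lie algebras agree. We do this separately for the derived groups and for the centres. The basic inputs are the following.
\begin{enumerate}
\item $\MT(V,\Psi)$ is connected and reductive, because the Hodge structure is polarized.
\item Since $V$ is an irreducible Hodge structure, $\End_{\MT}(V)=\End_{\mathrm{Hdg}}(V)=F$.
\item The Hodge cocharacter $\mu$ has weights $(-1,0,1)$ with multiplicities $(1,\dim V-2,1)$, after normalising the weight to $0$.
\end{enumerate}
Write $V_{\overline{\Q}}=\bigoplus_{\tau\in\Hom(F,\overline{\Q})}V_\tau$ as in the proof of Proposition~\ref{prop: ord-slope}. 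Because the commutant of $\MT$ is exactly $F$, each $V_\tau$ is an irreducible representation of $\MT_{\overline{\Q}}$.

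The derived Lie algebra $\mathfrak g=\operatorname{Lie}(\MT^{\mathrm{der}})_{\overline{\Q}}$ is generated by the $\Gal(\overline{\Q}/\Q)$-conjugates of (the semisimple part of) $\mu$. For the totally real case, $\mu$ acts non-trivially only on $V_{\tau_0}$, with weights $(-1,0,\dots,0,1)$. Its conjugate under $\gamma$ acts the same way on $V_{\gamma\tau_0}$ and trivially elsewhere.

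The key step is the following irreducibility-plus-minuscule lemma. Let $\mathfrak h\subseteq\mathfrak{so}(W)$ be reductive, acting irreducibly on the orthogonal space $W$ with $\dim W\ge 3$. Suppose $\mathfrak h$ contains a semisimple element with eigenvalues $(-1,0,\dots,0,1)$. Then $\mathfrak h=\mathfrak{so}(W)$. For the CM case, let $W$ be irreducible and suppose $\mathfrak h$ contains an element with eigenvalues $(1,0,\dots,0)$ up to a scalar. Then $\mathfrak h\supseteq\mathfrak{sl}(W)$.

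I would prove this lemma by decomposing $\mathfrak h$ into simple factors. The given element has only two or three eigenvalues, one of them with multiplicity one. This forces a single simple factor to act through a minuscule representation in which the element is the fundamental coweight. Checking the short list of minuscule representations then leaves only the standard representation of $\mathfrak{so}$ or of $\mathfrak{sl}$. This classification is the step I expect to be the main obstacle. Assumption (A2), which gives $n\ge 2$ and hence $\dim V_\tau\ge 3$ or an honest $\mathfrak{sl}_n$, is exactly what excludes the degenerate low-dimensional cases.

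Applying the lemma to each $\tau$ (respectively to each pair $\{\tau,\tau^*\}$ in the CM case, where $V_{\tau^*}\cong V_\tau^\vee$) shows that the image of $\mathfrak g$ in $\mathfrak{gl}(V_\tau)$ is $\mathfrak{so}(V_\tau)$, respectively $\mathfrak{sl}(V_\tau)$. It remains to check that $\mathfrak g$ is the full product of these factors and not a diagonal subalgebra. Suppose two indices $\tau\neq\tau'$ gave the same simple factor. Then $V_\tau$ and $V_{\tau'}$ (or $V_{\tau'}^\vee$) would be isomorphic $\mathfrak g$-modules. Such an isomorphism yields an endomorphism commuting with $\MT$ that is not in $F$, which contradicts $\End_{\MT}(V)=F$. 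By Goursat's lemma, $\mathfrak g$ is therefore the full product. This gives $\MT^{\mathrm{der}}=\operatorname{Res}_{F/\Q}\SO$ in the totally real case and $\MT^{\mathrm{der}}=\operatorname{Res}_{F_0/\Q}\mathrm{SU}$ in the CM case. In both cases this is $(\GO_F(V,\Psi)^{\circ})^{\mathrm{der}}$.

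Finally we compare centres. In the totally real case, the centre of $\GO_F^{\circ}$ is just the scalars $\mathbb G_m$, and these lie in $\MT$ via the weight cocharacter. In the CM case, the centre of $\GO_F^{\circ}$ is the torus $\{x\in \operatorname{Res}_{F/\Q}\mathbb G_m : xx^*\in\mathbb G_m\}$. The centre of $\MT$ is a subtorus of it containing $\mathbb G_m$. To show it is the whole torus, I would use that $h(\sqrt{-1})$ acts on $V^{2,0}\subset V_{\tau_0}$ by a scalar different from its action on $V^{1,1}\cap V_{\tau_0^*}$. Its central component, together with its Galois conjugates, then generates the full norm-one torus of $F/F_0$. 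This follows from the standard argument that the $F^\times$-valued "determinant" of $h$ is the reflex-type norm of a CM type, which is nondegenerate because $\tau_0$ runs over all embeddings under Galois. Combining the derived and central parts gives $\operatorname{Lie}\MT=\operatorname{Lie}\GO_F^{\circ}$, and hence equality of the connected groups.
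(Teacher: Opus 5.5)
The paper does not prove this statement; it quotes it from Zarhin. Your outline therefore has to stand on its own. Its architecture is the standard one, and most steps are sound: irreducibility of each $V_\tau$ from $\End_{\MT}(V)=F$, the Hodge element and its conjugates, a lemma on each block, and the centre computation.

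\textbf{The gap: the non-gluing step.} You argue that if $\tau\neq\tau'$ share a simple factor of $\mathfrak g$, then $V_\tau\cong V_{\tau'}$ (or $V_{\tau'}^\vee$) as $\mathfrak g$-modules, and that this gives an element of $\End_{\MT}(V)$ outside $F$. This fails in two ways.
\begin{itemize}
\item \emph{CM case.} A $\mathfrak g$-isomorphism between two blocks commutes with $\MT$ only if the centre acts on both blocks through the same character. The centre sits in $T_F=\{x\in\operatorname{Res}_{F/\Q}\mathbb G_m : xx^*\in\mathbb G_m\}$, which acts on $V_\tau$ through $\tau$. In the end the centre is all of $T_F$, on which the characters $\tau$ are pairwise distinct. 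So a candidate $T_F\cdot\Delta$, with $\Delta\cong\mathrm{SU}_n$ embedded diagonally into two pairs of blocks, still has commutant exactly $F$. The condition $\End_{\MT}(V)=F$ cannot exclude it.
\item \emph{Totally real case under (A2),} where $\dim V_\tau=2n$. For $n=2$, $\mathfrak{so}_4\cong\mathfrak{sl}_2\times\mathfrak{sl}_2$ is not simple, and a single $\mathfrak{sl}_2$ can be shared, e.g.\ $V_\tau=A\otimes B$ and $V_{\tau'}=A\otimes C$. For $n=4$, a gluing through a triality automorphism makes $V_{\tau'}$ a half-spin representation. In neither case are the two modules isomorphic, so no contradiction arises.
\end{itemize}

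\textbf{The repair, using input you already have.} Let $y_\tau\in\mathfrak g_\C$ be the $\mathfrak g$-component of an $\mathrm{Aut}(\C/\Q)$-conjugate of $d\mu$. Use $\mathrm{Aut}(\C/\Q)$ rather than $\Gal(\overline{\Q}/\Q)$, since $\mu$ is not defined over $\overline{\Q}$. Then $y_\tau$ acts non-trivially only on $V_\tau$ (respectively only on $V_\tau\oplus V_{\tau^*}$). In the CM case this holds because the $\mathfrak g$-component is a traceless scalar, hence $0$, on every other block. The image of $y_\tau$ in $\mathfrak{so}(V_\tau)$ or $\mathfrak{sl}(V_\tau)$ has a non-zero component in every simple factor; in $\mathfrak{so}_4$ it is $(\tfrac12 h,\tfrac12 h)$. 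Let $p_{\tau'}\colon\mathfrak g\to\mathfrak{gl}(V_{\tau'})$ be the projection. Then $p_{\tau'}(y_\tau)=0$ kills the component of $y_\tau$ in every simple ideal acting non-trivially on $V_{\tau'}$. Hence no simple ideal is shared, and $\mathfrak g$ is the full product.

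\textbf{The key lemma.} The lemma is true, but your sketch of its proof is inaccurate. For $\dim W=4$ the element lives in two simple factors, and this is exactly the case $n=2$. Also, the standard representation of $\mathfrak{so}_{2k+1}$ is not minuscule, so checking minuscule representations is the wrong list. A classification-free proof:
\begin{itemize}
\item Grade by $\operatorname{ad}x$. Then $\mathfrak{so}(W)$ has no $\pm2$ part, because $W^{\pm1}$ are isotropic lines paired by $\Psi$.
\item Irreducibility and PBW give $W^0=\mathfrak h^{-1}W^1$.
\item Since $\mathfrak{so}(W)^{-1}\cong\Hom(W^1,W^0)$, this forces $\mathfrak h^{-1}=\mathfrak{so}(W)^{-1}$, and likewise $\mathfrak h^{1}=\mathfrak{so}(W)^{1}$ using $-x$.
\item These two pieces generate $\mathfrak{so}(W)$. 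The $\mathfrak{gl}$ case is the same.
\end{itemize}

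\textbf{Hypothesis (A2).} The theorem as stated does not assume (A2). In the totally real case, $\dim_F V\ge3$ is automatic, since otherwise $\MT$ would be a torus. In the CM case with $\dim_F V=1$, the derived part is trivial.
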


On the other hand, the Mumford-Tate conjecture holds for K3 surface by the following result of Tankeev.
\begin{theorem}[Tankeev,\cite{Tankeev1991, Tankeev1995}]\label{thm: MT for k3}
There exists a canonical isomorphism
$$G_{X,l}^\circ \simeq \MT_X \otimes \Q_l.$$
\end{theorem}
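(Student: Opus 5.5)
We would prove the two inclusions $G_{X,l}^\circ\subseteq \MT_X\otimes\Q_l$ and $\MT_X\otimes\Q_l\subseteq G_{X,l}^\circ$ separately. Both inclusions are modelled on Deligne's treatment of the Weil conjectures for K3 surfaces, and both go through the Kuga--Satake abelian variety. The first step is to pass, after a finite extension of $L$ (harmless, since only the identity component is at stake), to an abelian variety $A$ over $L$ with $C^+(V)\hookrightarrow \End(H^1(A))$ such that $V=T(X)_\Q$ sits in $\End(H^1(A))$ compatibly with Hodge and Galois structures. Deligne's theorem that Hodge cycles on abelian varieties are absolutely Hodge then shows that the Kuga--Satake correspondence is Galois-equivariant on $l$-adic realizations. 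Consequently every Hodge tensor on $H^2(X)$ is fixed by an open subgroup of Galois, which gives $G_{X,l}^\circ\subseteq \MT_X\otimes\Q_l$.

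For the reverse inclusion we use Zarhin's theorem to write $\MT_X\otimes\Q_l=\GO_F(V,\Psi)^\circ\otimes\Q_l$, and we collect three properties of $H:=G_{X,l}^\circ$.
\begin{enumerate}
\item $H$ is reductive and contains the homotheties; this is Faltings' semisimplicity for $A$, transported through Kuga--Satake.
\item The commutant of $H$ in $\End(V_{\Q_l})$ equals $F\otimes\Q_l$. This follows from Faltings' Tate conjecture for $A\times A$ and the Kuga--Satake dictionary, which yields the Tate conjecture for divisors on $X\times X$ restricted to $T(X)$. In particular $V_{\Q_l}$ has no Galois-invariant classes beyond $\NS$.
\item $H$ contains, up to conjugacy over $\overline{\Q}_l$, a cocharacter with weights $(-1,0,\dots,0,1)$ on $V$, i.e.\ the Hodge cocharacter. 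For this we use Pink's theorem on minuscule weights, or Sen's Hodge--Tate cocharacter combined with Faltings' Hodge--Tate comparison.
\end{enumerate}

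The concluding step is a group-theoretic classification. Decompose $V_{\overline{\Q}_l}=\bigoplus_\tau V_\tau$ along $F\otimes\overline{\Q}_l$. On each factor the derived group of $H$ acts irreducibly, since the commutant is exactly $F\otimes\Q_l$. Galois conjugacy permutes the factors transitively, and the weights of the cocharacter are the same in each $\overline{\Q}_l$-embedding orbit. The key lemma, due to Zarhin, is that an irreducible connected reductive subgroup of $\SO(V_\tau)$ (totally real case) or $\GL(V_\tau)$ (CM case) containing a cocharacter with a single $\pm1$ pair of weights is all of $\SO(V_\tau)$, respectively all of $\mathrm{U}$ of the hermitian form. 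The proof runs through the Lie algebra: the cocharacter generates a reflection-like element, and the normal closure of such an element in an irreducible orthogonal Lie algebra is the whole of $\mathfrak{so}$. Assumption (A2) ($n\ge 2$ in the totally real case, non-CM in general) rules out the small exceptional dimensions where this fails.

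Combining the lemma with transitivity of Galois on the factors and the commutant statement, which prevents $H$ from being a diagonally embedded proper subgroup of the product, gives $H\supseteq \GO_F(V,\Psi)^\circ\otimes\Q_l$. I expect the main obstacle to be the third property, producing the minuscule cocharacter inside $H$ rather than merely inside $\MT$, together with the exclusion of diagonal subgroups in the product over $\tau$. For the latter I would try Goursat's lemma combined with property (2): if two factors were linked by an isomorphism, that isomorphism would give an extra Galois-equivariant endomorphism, i.e.\ a commutant strictly larger than $F\otimes\Q_l$.
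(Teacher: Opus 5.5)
The paper gives no proof of this statement. It quotes Tankeev and uses the result only through Corollary~\ref{anothergroup}, so your outline has to stand on its own. Its architecture is the classical Lie-theoretic one: containment via Kuga--Satake and absolute Hodge cycles, then reductivity, commutant, a Hodge--Tate cocharacter, and Zarhin's classification. The first inclusion and property (3) are fine; (3) is actually the routine part, since Sen's theorem puts the Sen operator at any place $v\mid l$ into $\mathrm{Lie}(G_{X,l}^\circ)\otimes\C_l$.

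The first genuine gap is property (2). An endomorphism of $T(X)_l$ is a class in $H^2(X)\otimes H^2(X)(2)\subset H^4(X\times X)(2)$, i.e.\ a codimension-$2$ Tate class. Divisor classes on $X\times X$ live in $H^2(X)(1)\oplus H^2(X)(1)$ and say nothing about $\End(T(X)_l)$. Through Kuga--Satake, $V\subset\End(H^1(A))$, so $\End(V)$ consists of degree-four tensors in $H^1(A)$. Faltings' theorem only controls $\End_{\Gal}(H^1(A)_l)=\End^0(A)\otimes\Q_l$, so it does not reach these directly. The equality $\End_H(V_{\Q_l})=F\otimes\Q_l$ is a ``Tate classes are Hodge classes'' statement for $T(X)\otimes T(X)$ and needs its own argument. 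One route is to extract it from the double-centralizer consequence $\Q_l[G_{A,l}^\circ]=\Q_l[\MT_A]$ of Faltings inside $\End(H^1(A)_l)$, by analysing how spin representations restrict. Property (2) cannot be dropped. Take $V_\tau=W\perp W'$ with $\dim W,\dim W'\geq 2$, the image of $H$ in $\SO(V_\tau)$ equal to $\SO(W)\times\SO(W')$, and a Hodge--Tate cocharacter supported on $W$. This is compatible with (1), (3) and the Tate conjecture for divisors; only (2) rules out such reducibility.

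Two further things go wrong in the concluding step. First, the claim that Galois conjugacy permutes the factors $V_\tau$ transitively is false over $\Q_l$. $H$ is only a $\Q_l$-group, and $\Gal(\Qlb/\Q_l)$ has one orbit on $\Hom(F,\Qlb)$ for each prime of $F$ above $l$. A single Sen cocharacter is supported on one factor $V_{\tau_v}$ (together with $V_{\tau_v^*}$ in the CM case), so it controls only the factors above one prime of $F$. To fix this, enlarge $L$ so that it is Galois over $\Q$ and $F$ acts over $L$, and use all places $v\mid l$. If $F$ acts on $H^0(X,\Omega^2)$ through $\tau_0\colon F\to L$, then, by compatibility of the $F$-action with the comparison isomorphism at $v$, the place $v$ (with embedding $j_v\colon L\to\Qlb$) gives a cocharacter supported on $j_v\circ\tau_0$. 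These embeddings exhaust $\Hom(F,\Qlb)$.

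Second, Goursat plus the commutant cannot exclude gluing of factors that are not simple, or that are glued by outer automorphisms changing the representation. This already happens for $n=2$ with $F$ totally real, a case (A2) allows. With $A,B,C$ two-dimensional, let $\mathfrak{sl}(A)\oplus\mathfrak{sl}(B)\oplus\mathfrak{sl}(C)$ act on $V_{\tau_1}=A\otimes B$ and $V_{\tau_2}=B\otimes C$. Both are irreducible and non-isomorphic, so the commutant is just scalars on each and no extra endomorphism appears. Yet this is a proper subalgebra of $\mathfrak{so}(V_{\tau_1})\oplus\mathfrak{so}(V_{\tau_2})$. The same happens for $2n=8$ when two copies of $\mathfrak{so}_8$ are glued by triality.

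What excludes such gluing is the support of the cocharacters. Modulo the centre, the cocharacter attached to $\tau_1$ acts trivially on $V_{\tau_2}$. But any element with weights $(-1,0,0,1)$ on $A\otimes B$ has nonzero $\mathfrak{sl}(B)$-component, so it acts nontrivially on $B\otimes C$. With a cocharacter supported on each factor, every simple ideal of each factor is an ideal of $\mathrm{Lie}(H)$ by itself. The images of these cocharacters in the abelianization also supply the central torus in the CM case. This support argument, not the commutant, is what closes the proof.
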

Let $V_l := V \otimes \Q_l \subset H^2(X_{\overline{L}},\Q_l)$ together with the bilinear form $\Psi_l:=\Psi \otimes \Q_l$. The Galois action preserves the N\'eron--Severi part and hence also
preserves its orthogonal complement $V_l$. Moreover, the Galois
action on the Tate twist $\NS(X)\otimes\Q_l(1)$
has finite image. Therefore, on the identity component
$G_{X,l}^{\circ}$, the action on the untwisted N\'eron--Severi part is
through the cyclotomic character and is determined by the action on
$V_l$. Hence restriction to $V_l$ identifies
$G_{X,l}^{\circ}$ with its image in
$\GO(V_l,\Psi_l)$. Combining the Mumford--Tate conjecture with Zarhin's theorem, we obtain
the following.


\begin{corollary}\label{anothergroup}
$G_{X,l}^\circ \cong \GO_F(V_l,\Psi_l)^{\circ}$
\end{corollary}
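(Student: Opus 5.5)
The plan is to obtain Corollary~\ref{anothergroup} by composing the two isomorphisms already quoted. Tankeev's Theorem~\ref{thm: MT for k3} and Zarhin's theorem do most of the work; what needs care is that the identifications are compatible and that forming the identity component commutes with base change to $\Q_l$.

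First, I will pin down the ambient spaces. By the discussion preceding the corollary, restriction to $V_l = V\otimes\Q_l$ identifies $G_{X,l}^\circ$ with its image in $\GO(V_l,\Psi_l)$. The argument is that $G_{X,l}^\circ$ acts on the Néron--Severi part through the cyclotomic character, since the Galois action on $\NS(X)\otimes\Q_l(1)$ has finite image and so is killed on the identity component. This character is the similitude character, which is already determined by the action on $V_l$, so restriction is injective. In the same way, restriction to $V$ identifies $\MT_X$ with $\MT(V,\Psi)$.

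Second, I will check compatibility. Tankeev's isomorphism $G_{X,l}^\circ \simeq \MT_X\otimes\Q_l$ comes from the comparison isomorphism $H^2(X_{\overline L},\Q_l)\cong H^2(X_\C,\Q)\otimes\Q_l$. This comparison carries $\NS$ to $\NS$ via the cycle class map and matches $\Psi$ with $\Psi_l$. Hence it carries $V$ to $V_l$ and is compatible with the restriction maps above. We therefore get $G_{X,l}^\circ \cong \MT(V,\Psi)\otimes\Q_l$ as subgroups of $\GO(V_l,\Psi_l)$.

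Third, I will apply Zarhin's theorem. Under (A2), $X$ is non-CM, and $V=T(X)_\Q$ is an irreducible Hodge structure of K3 type. So $\MT(V,\Psi)=\GO_F(V,\Psi)^\circ$, and we obtain
\[
G_{X,l}^\circ \cong \GO_F(V,\Psi)^\circ\otimes\Q_l .
\]
It remains to identify the right-hand side with $\GO_F(V_l,\Psi_l)^\circ$. This needs two facts.
\begin{enumerate}
\item Centralizers commute with flat base change. Here $\operatorname{Cent}_{\GO(V,\Psi)}(F)\otimes\Q_l = \operatorname{Cent}_{\GO(V_l,\Psi_l)}(F\otimes\Q_l)$, which follows directly from the functor-of-points description of $\GO_F$ given in the paper.
\item Forming the identity component commutes with field extension in characteristic $0$, since $G^\circ$ is geometrically connected.
\end{enumerate}

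I expect the only delicate point to be the compatibility of Tankeev's isomorphism with restriction to the transcendental part. If needed, I would handle it by noting that both groups lie in the centralizer of the algebraic classes, and that on each group the Néron--Severi action factors through the same similitude character.
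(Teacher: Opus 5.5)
Your route is the paper's own: restrict to $V_l$, invoke Tankeev, then Zarhin, then base change. Your base-change facts in the last step are fine. The first step, however, fails, and it is exactly the justification the paper gives before the corollary: restriction to $V_l$ is \emph{not} injective on $G_{X,l}^\circ$.

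The group $G_{X,l}^\circ$ does act on $\NS_l:=\NS(X_{\overline L})\otimes\Q_l(-1)$ by scalars through a character $s$. But $s$ is not the similitude character $\alpha$ of $V_l$. Because $\Psi_l$ takes values in $\Q_l(-2)$, on $\Gal(\overline L/L')$ (with $L'\supseteq\Lconn$ a field of definition of $\NS$, whose image is Zariski dense in $G_{X,l}^\circ$) one has $s=\chi$ and $\alpha=\chi^2$. So you only get $s^2=\alpha$. Thus $s$ is a function of the $V_l$-component only if $\alpha$ has an algebraic square root on $\GO_F(V_l,\Psi_l)^\circ$, and under (A2) it has none. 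Over $\overline{\Q}_l$, in the CM case $\GO_F^\circ\cong\mathbb{G}_m\times\prod_{i=1}^m\GL_n$, with $\alpha$ the $\mathbb{G}_m$-coordinate and every character of the form $c^a\prod_i\det(M_i)^{b_i}$. In the totally real case the character group of $\GO_F^\circ$ is $\Z\alpha$.

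Consequently $\{(g,s): s^2=\alpha(g)\}$ is a \emph{connected} double cover of $\GO_F(V_l,\Psi_l)^\circ$. Now $G_{X,l}^\circ$ is a closed connected subgroup of it of the same dimension, hence equal to it. So the element acting by $1$ on $V_l$ and by $-1$ on $\NS_l$ lies in $G_{X,l}^\circ$ and in the kernel of restriction.

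The same happens on the Hodge side. The element $-1$ lies in the Hodge group of $V$ (a unitary group in the CM case; $\SO_F(V)$ with $\dim_F V$ even in the totally real case) and acts by $-1$ on odd tensor powers of $V$. Hence $\Q(-1)\notin\langle V\rangle$, and $\MT_X\to\MT(V,\Psi)$ is a degree-$2$ isogeny, not an isomorphism. Your second step inherits this error, and the fallback you sketch only addresses compatibility, not injectivity.

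What your argument actually proves is that restriction maps $G_{X,l}^\circ$ onto $\GO_F(V_l,\Psi_l)^\circ$ with kernel of order $2$, i.e.\ $G_{X,l}^\circ=\{(g,s)\in\GO_F(V_l,\Psi_l)^\circ\times\mathbb{G}_m : s^2=\alpha(g)\}$. In the totally real case this is not even abstractly isomorphic to $\GO_F(V_l,\Psi_l)^\circ$ over $\overline{\Q}_l$. Via $(g,s)\mapsto(s,g/s)$ it becomes $\mathbb{G}_m\times\prod_{i=1}^m\SO_{2n}$, whereas $\GO_F^\circ\cong(\mathbb{G}_m\times\prod_{i=1}^m\SO_{2n})/\mu_2$; for $n\ge 2$ their centres have $2^m$ and $2^{m-1}$ components respectively. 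In the CM case the two groups are abstractly isomorphic over $\overline{\Q}_l$, but not via restriction. Indeed, the character $\chi$ used in Section~3 is algebraic on $G_{X,l}^\circ$ only because of this extra $\mu_2$.

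The corrected statement is all the later argument needs. The subgroup of $G_{X,l}^\circ$ acting trivially on $\NS_l$ maps isomorphically onto $\ker(\alpha)\subset\GO_F(V_l,\Psi_l)^\circ$. So $G_F$ still lies in $G_{X,l}^\circ\otimes\overline{\Q}_l$, acting trivially on $\NS_l$, which is what the non-constancy computation for $b_\sigma$ uses. You should state and prove the corollary in this form.
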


\section{The $\mu$-ordinary invariant $a_{\pp}$} \label{refinement}
In this section, we adapt the strategy of \cite[Section 3]{serre2} to the second
cohomology of a \text{K3} surface. The main idea is to construct a
normalized Frobenius trace $a_{\pp}$ which detects whether a prime is
$\mu$-ordinary. The normalization removes the weight-two
contribution from Frobenius, so that at a non-$\mu$-ordinary
prime $\pp$, the resulting trace belongs to a finite set independent
of $p$.

Let $\chi$ denote the one-dimensional character of $G_{X,l}$
induced by the Galois action on the Tate twist $\Q_l(-1)$.
With our convention for Frobenius, for every degree-one prime
$\mathfrak p$ above $p$, we have $\chi(\Fr_{\mathfrak p})=p$.
For each prime $\mathfrak{p} \in S$ above a rational prime $p$, the Frobenius $\Fr_\mathfrak{p} \in \text{Gal}(\overline{L}/L)$ is well-defined up to conjugacy. By abuse of notation, let us also use $\Fr_\mathfrak{p}$ for its image under $\rho_{X,l}$. 

Recall that $d=[F:K]$, where $K$ is the largest subfield of $F$ over which $p$ splits completely. The trace on the algebraic representation $\wedge^d H^2(X_{\overline{L}},\Q_l) \otimes \chi^{-d}$ gives a well-defined invariant:
$$a_\mathfrak{p}:= \Tr(\Fr_{\mathfrak{p}} \mid \wedge^d H^2(X_{\overline{L}},\Q_l) \otimes \chi^{-d}).$$

\begin{proposition} \label{detect-mu-ord}
If $\mathfrak{p}$ is not $\mu$-ordinary, then $a_\mathfrak{p}$ is an integer in $[-\binom{22}{d},\binom{22}{d}]$.
\end{proposition}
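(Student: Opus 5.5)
Let $\alpha_1,\dots,\alpha_{22}$ be the eigenvalues of $\Fr_{\pp}$ on $H^2(X_{\overline{L}},\Q_l)$, taken in $\overline{\Q}_l$ and identified with algebraic numbers via the Weil conjectures. Since $\pp\in S$ has degree one, $\chi(\Fr_{\pp})=p$, so the normalized eigenvalues are $\beta_i:=\alpha_i/p$ and
\[
a_{\pp}=\sum_{|I|=d}\prod_{i\in I}\beta_i=e_d(\beta_1,\dots,\beta_{22}),
\]
the $d$-th elementary symmetric function of the $\beta_i$. The plan is to show three things: that $a_{\pp}$ is rational, that it is an algebraic integer when $\pp$ is not $\mu$-ordinary, and that every $\beta_i$ has absolute value $1$ under every complex embedding. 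These give $a_{\pp}\in\Z$ with $|a_{\pp}|\le\binom{22}{d}$.

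\emph{Rationality and the archimedean bound.} The characteristic polynomial of $\Fr_{\pp}$ on $H^2$ has coefficients in $\Z$, independent of $l$. Hence $e_d(\alpha)\in\Z$ and $a_{\pp}=p^{-d}e_d(\alpha)\in\Q$. By Deligne's proof of the Weil conjectures, every $\alpha_i$ has absolute value $p$ under every complex embedding. So $|\beta_i|=1$, and the triangle inequality bounds $|a_{\pp}|$ by the number of $d$-subsets, $\binom{22}{d}$.

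\emph{Integrality at primes $q\ne p$.} Each $\alpha_i$ is an algebraic integer, since it is a root of a monic integer polynomial. As $p$ is a unit at $q\neq p$, the $\beta_i$ are $q$-integral. So the only remaining issue is $p$-integrality.

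\emph{$p$-integrality.} This step uses the hypothesis that $\pp$ is not $\mu$-ordinary, and it is the heart of the proof. Fix a $p$-adic valuation $v$ on $\overline{\Q}$ normalized by $v(p)=1$. The multiset $\{v(\alpha_i)\}$ is the Newton polygon $\nu_{\pp}(X)$, so $v(\beta_i)=v(\alpha_i)-1$. Suppose first that $X_{\pp}$ has height $h$. Then $\nu_{\pp}$ has slopes $1-\frac1h,1,1+\frac1h$ with multiplicities $h,22-2h,h$. Any product $\prod_{i\in I}\beta_i$ with $|I|=d$ therefore has valuation at least $-\min(d,h)/h$.

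Next, I claim $h\ge d+1$, or $X_{\pp}$ is supersingular. By the Rapoport--Richartz description invoked in Proposition~\ref{prop: ord-slope}, the Newton polygon of $X_{\pp}$ lies on or above $\mu_{\pp}$; this is the standard Mazur-type inequality, because $F$ acts on the crystal compatibly. A height-$h$ polygon lies on or above the height-$d$ polygon exactly when $h\ge d$, and it equals it when $h=d$. So non-$\mu$-ordinary means $h>d$ or the polygon is supersingular. In the case $h>d$, every product has valuation at least $-d/h>-1$. In the supersingular case all $v(\beta_i)=0$.

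It remains to pass from this lower bound to $v(a_{\pp})\ge 0$. The point is that $a_{\pp}$ is rational, so its valuation $v(a_{\pp})$ is an integer. By the ultrametric inequality, $v(a_{\pp})\ge -d/h>-1$. An integer greater than $-1$ is at least $0$, so $a_{\pp}$ is $p$-integral. Combined with $q$-integrality for all $q\ne p$, this gives $a_{\pp}\in\Z$, which completes the argument.

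The main obstacle is justifying that the Newton polygon lies above $\mu_{\pp}$, i.e.\ that $h\ge d$. This does not follow from the slope list alone. It needs the $F$-structure on the crystalline realization of $T(X)$, which rests on the Kuga--Satake or Shimura-variety integral model at the good prime $p$. Here I would cite Rapoport--Richartz together with the fact that $X_{\pp}$ gives a point of $\Sh(\DD)_{\fpb}$.
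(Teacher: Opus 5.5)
Your proof is correct and follows the paper's argument. The paper shows that $b=\Tr(\Fr_\pp\mid\wedge^d H^2)$ is an integer, that non-$\mu$-ordinarity makes every slope exceed $1-\frac1d$ so $v_p(b)>d-1$ and hence $p^d\mid b$, and then applies the Weil bound $|b|\le p^d\binom{22}{d}$; normalizing by $p$ first and checking $q$-integrality separately, as you do, is only a cosmetic difference. The input you flag --- that $\nu_\pp(X)$ lies on or above $\mu_\pp$ --- is exactly what the paper uses implicitly, taking it from its definition of $\mu_\pp$ as the lowest polygon occurring on $\Sh(\DD)_{\fpb}$.
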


\begin{proof}
The characteristic polynomial of $\Fr_\mathfrak{p}$ acting on $H^2(X_{\overline{L}},\Q_l)$ has the form
$$P(T):= T^{22}+c_1 T^{21} + \dots + c_{21} T+ c_{22}.$$
By Deligne's proof of the Weil conjectures
\cite{Deligne81,Deligne7172}, the action of $\Fr_{\pp}$ is semisimple,
and the coefficients $c_i$ are integers independent of $l$. Let us split $P(T)$ over $\overline{\Q}$ into
$$P(T)=(T-\alpha_1) \dots (T-\alpha_{22}).$$
From Weil's conjecture, the $\alpha_i$ are algebraic integers with Archimedean absolute value $p$ and $\mathfrak{q}$-adic units for any prime $\mathfrak{q}$ of $L$ away from $p$.

By \cite[Chapter 8]{Ogusbook}, the $p$-adic valuations of $\alpha_i$ are exactly the slopes of the Newton polygon $\nu_{\pp}(X)$.
The trace of $\Fr_\mathfrak{p}$ on $\wedge^d H^2(X_{\overline{L}},\Q_l)$ is given by
$$b:=\text{Tr}(\Fr_\mathfrak{p} \mid \wedge^d H^2(X_{\overline{L}},\Q_l)) = \sum\limits_{I \subset \{1,\dots,22\}, \#I = d} \prod_{i \in I} \alpha_i$$
Moreover, $b$ is an integer as it can be written as a polynomial with integer coefficients in the variables $c_i$. Since $\mathfrak{p}$ is not $\mu$-ordinary, by Proposition \ref{prop: ord-slope}, the smallest slope of $\nu_{\pp}(X)$ is greater than $1-\frac{1}{d}$. Thus, the sum of any $d$ slopes of $\nu_{\pp}(X)$ is greater than $d-1$, i.e., the $p$-valuation of $\prod_{i \in I} \alpha_i$ is greater than $d-1$. Hence, the $p$-valuation of $b$ must be at least $d$, or equivalently $p^d \mid b$. In addition, as each $\alpha_i$ has absolute value $p$, we deduce that
$$ -p^d \binom{22}{d} \le b \le p^d \binom{22}{d}.$$
Since $\chi(\Fr_{\pp})=p$, we have that $a_{\pp}=\frac{b}{p^d}$. Hence $a_{\pp}$ is an integer in $[-\binom{22}{d}, \binom{22}{d}]$
\end{proof}

We now relate the density of non-$\mu$-ordinary primes to the behavior of
certain algebraic trace functions on the connected components of $G_{X,l}$. We will be using the following commutative diagram:
\[
\begin{tikzcd}
    G_{X,l}^\circ \arrow[d, hook] &  \text{Gal}(\overline{L}/L) \arrow[ld,"\rho_{X,l}"] \ni \Fr_\mathfrak{p}\arrow[d, two heads] \\
    G_{X,l} \arrow[d, two heads, "\pi_0"] & \text{Gal}(FL/L) \ni \sigma \arrow[d, two heads, "\kappa"] \arrow[r, hook, "\iota"] & \text{Gal}(F/\Q) \\
    \pi_0(G_{X,l}) \arrow[r, leftarrow, "\simeq"', "\rho_{X,l}"]  & \text{Gal}(\Lconn/L).
\end{tikzcd}
\]

Here  $\kappa$ is induced by the
inclusion $\Lconn\subseteq FL$, and $\iota$ is restriction to $F$. By
Assumption~\textup{(A3)}, the map $\kappa$ is surjective, while $\iota$ is
injective.


Let $S$ be the set of primes in $L$ in Definition \ref{def: good primes}. Let $S[\nu]:=\{\pp \in S: \pp \text{ is not $\mu$-ordinary}\}$. For $A,B$ sets of primes in $L$, let $\rho(A:B)$ be the density of $A$ in $B$ and $\rho(A)$ be the density of $A$ as in Definition \ref{def: density of primes}. 

We want to compute $\rho(S[\nu])$. Equivalently, since $\rho(S)=1$, we want to compute $\rho(S[\nu]:S)$. For this purpose, we consider the following partition of $S$:
\[
S
=
\coprod_{\sigma\in\Gal(FL/L)}S_\sigma.
\]
where $S_\sigma
:=
\left\{
\pp\in S:
\left.\Fr_\pp\right|_{FL}=\sigma
\right\}.$
Since $\Gal(FL/L)$ is abelian, this condition is independent of the choice
of the Frobenius element $\Fr_\pp$. 

By the Chebotarev density theorem, we have that $\rho(S_\sigma:S)
=
\frac{1}{[FL:L]}$
for every $\sigma\in\Gal(FL/L)$. Hence, it suffices to compute $\rho(S_{\sigma}[\nu]: S_{\sigma})$, where $S_{\sigma}[\nu]=S_{\sigma} \cap S[\nu]$.

Set $\Gamma
:=
\rho_{X,l}\bigl(\Gal(\overline L/FL)\bigr)$. 
Fix $\sigma\in\Gal(FL/L)$, let $G_{X,l}^{(\sigma)}$ be the connected component of $G_{X,l}$ corresponding to $\sigma$. For the same $\sigma$, choose a lift $\widetilde{\sigma}\in\Gal(\overline L/L)$
and define $\Gamma_\sigma
:=
\rho_{X,l}(\widetilde{\sigma})\Gamma$. Because $\Lconn\subseteq FL$, the subgroup $\Gamma$ is Zariski dense in $G_{X,l}^{\circ}$. Hence, in this notation, we have:
\begin{equation}
\overline{\Gamma}=G_{X,l}^{\circ} \qquad{} \overline{\Gamma}_{\sigma}=G_{X,l}^{(\sigma)}
\end{equation}

\begin{lemma}[Chebotarev--Serre]\label{lem:chebotarev-haar}
Let $\sigma\in\Gal(FL/L)$, let $\Gamma_{\sigma}$ be as defined above and let $\mu_{\sigma}$ be the translated Haar measure on $\Gamma_{\sigma}$. Let $A\subseteq\Gamma_\sigma$
be a conjugacy invariant Borel subset whose
boundary has $\mu_\sigma$-measure zero. Then
\[
\rho\left(
\left\{
\pp\in S_\sigma:
\rho_{X,l}(\Fr_\pp)\in A
\right\}
:S_\sigma
\right)
=
\mu_\sigma(A).
\]
\end{lemma}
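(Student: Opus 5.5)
This is Serre's form of the Chebotarev density theorem for $l$-adic representations, restricted to a single coset of the open subgroup $\Gal(\overline L/FL)$. The plan is to reduce to the compact $l$-adic Lie group setting and use equidistribution of Frobenius elements.

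\emph{Step 1: pass to the closure.} Replace $\Gamma_\sigma$ by its closure in the $l$-adic topology. The image $\rho_{X,l}(\Gal(\overline L/L))$ is a compact $l$-adic Lie group $\mathcal G$, since Galois groups are profinite and $\rho_{X,l}$ is continuous. The image $\Gamma=\rho_{X,l}(\Gal(\overline L/FL))$ is an open normal subgroup of finite index, and $\Gamma_\sigma$ is one of its cosets. Let $\mu$ be the normalized Haar measure on $\mathcal G$. Then $\mu_\sigma$ is the translate to $\Gamma_\sigma$ of the Haar measure of $\Gamma$, i.e.\ $[FL:L]\cdot\mu|_{\Gamma_\sigma}$.

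\emph{Step 2: equidistribution.} Serre's Chebotarev theorem (Serre, \emph{Abelian $l$-adic representations}, Ch.~I, \S2.2) states the following. The Frobenius classes $\rho_{X,l}(\Fr_\pp)$, for unramified $\pp$ ordered by norm, are equidistributed in the space of conjugacy classes of $\mathcal G$ with respect to the pushforward of $\mu$. The mechanism is to apply the usual Chebotarev theorem to the finite quotients $\mathcal G/U$ for open normal $U$, and then pass to the limit.

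Removing the finitely many bad primes, the ramified-in-$F$ primes and the primes of degree $>1$ does not change densities, because $\rho(S)=1$. Hence for any $\mathcal G$-conjugacy invariant $\mu$-continuity set $B\subseteq\mathcal G$, the density of $\{\pp\in S:\rho_{X,l}(\Fr_\pp)\in B\}$ in $S$ is $\mu(B)$.

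\emph{Step 3: apply to $A$.} Take $B=A$. We have $\Fr_\pp|_{FL}=\sigma$ if and only if $\rho_{X,l}(\Fr_\pp)\in\Gamma_\sigma$, since $\ker$ of restriction corresponds to $\Gamma$ via $\Gal(\overline L/L)/\Gal(\overline L/FL)$. Therefore
\[
\rho(\{\pp\in S_\sigma:\rho_{X,l}(\Fr_\pp)\in A\}:S)=\mu(A).
\]
Now divide by $\rho(S_\sigma:S)=1/[FL:L]$ to get $[FL:L]\,\mu(A)=\mu_\sigma(A)$.

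\emph{Main obstacle: the invariance hypothesis.} The statement assumes $A$ is conjugacy invariant, presumably under $\Gamma$ or $G_{X,l}$. Serre's theorem needs invariance under $\mathcal G$-conjugation, because Frobenius is only defined up to $\mathcal G$-conjugacy.

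Conjugation by elements of $\mathcal G$ preserves the coset $\Gamma_\sigma$, because $\mathcal G/\Gamma\cong\Gal(FL/L)$ is abelian. In the application, $A$ is cut out by an algebraic class function on $G_{X,l}$, namely the trace $a_\pp$. That function is invariant under all of $G_{X,l}(\Q_l)\supseteq\mathcal G$. So I would interpret the hypothesis as $\mathcal G$-invariance, which is exactly what the argument requires.

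The boundary-measure-zero condition is what allows approximating the indicator of $A$ by continuous class functions. This is the standard Weyl-criterion step, and I would cite it rather than reprove it.
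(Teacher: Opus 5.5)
Your approach is the paper's. The paper's proof is only a citation of Serre's $l$-adic Chebotarev theorem \cite[Corollary~6.10]{Serrebook}, and your Steps 1--3 spell out the reduction from the full image $\mathcal G=\rho_{X,l}(\Gal(\overline L/L))$ to the coset $\Gamma_\sigma$. Your point that $A$ must be stable under $\mathcal G$-conjugation, not just $\Gamma$-conjugation, is correct, and the trace-defined sets in the application have this property.

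One step in Step 3 is not justified as written. The equivalence ``$\Fr_\pp|_{FL}=\sigma \iff \rho_{X,l}(\Fr_\pp)\in\Gamma_\sigma$'' and the formula $\mu_\sigma=[FL:L]\,\mu|_{\Gamma_\sigma}$ both amount to $\mathcal G/\Gamma\cong\Gal(FL/L)$, i.e.\ to $\ker\rho_{X,l}\subseteq\Gal(\overline L/FL)$.

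In general $\rho_{X,l}^{-1}(\Gamma_\sigma)=\widetilde\sigma\,\Gal(\overline L/FL)\,\ker\rho_{X,l}$, so only the forward implication is automatic. If $\ker\rho_{X,l}\not\subseteq\Gal(\overline L/FL)$, then three things go wrong:
\begin{itemize}
\item $[\mathcal G:\Gamma]<[FL:L]$;
\item the set whose density Step 3 actually computes is the union of the sets $\{\pp\in S_{\sigma'}:\rho_{X,l}(\Fr_\pp)\in A\}$ over all $\sigma'$ with $\Gamma_{\sigma'}=\Gamma_\sigma$;
\item $[FL:L]\,\mu|_{\Gamma_\sigma}$ has total mass greater than $1$, so your final formula is off by the factor $[FL:L]/[\mathcal G:\Gamma]$.
\end{itemize}

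The clean fix is to apply Serre's theorem to $\rho'=(\rho_{X,l},\mathrm{res}_{FL})$, whose image is $\mathcal G'\subseteq\mathcal G\times\Gal(FL/L)$.
\begin{itemize}
\item Exactly, $\{\pp\in S_\sigma:\rho_{X,l}(\Fr_\pp)\in A\}=\{\pp\in S:\rho'(\Fr_\pp)\in A\times\{\sigma\}\}$.
\item The set $A\times\{\sigma\}$ is $\mathcal G'$-conjugation invariant, because $\Gal(FL/L)$ is abelian.
\item Since $\mathcal G'\to\Gal(FL/L)$ is surjective, the fibre $\mathcal G'\cap(\mathcal G\times\{\sigma\})$ has Haar measure $1/[FL:L]$.
\item That fibre projects homeomorphically onto $\Gamma_\sigma$, carrying its normalized Haar measure to $\mu_\sigma$.
\end{itemize}
Dividing by $\rho(S_\sigma:S)=1/[FL:L]$ then gives exactly $\mu_\sigma(A)$.

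Alternatively, you can prove $\ker\rho_{X,l}\subseteq\Gal(\overline L/FL)$ directly. It does hold in this setting, but it needs an argument you did not give. Elements of $\ker\rho_{X,l}$ fix the endomorphisms in $F$. One must then use that the Galois action on these endomorphisms is compatible with its action on the scalars by which they act on $H^{2,0}$.
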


\begin{proof}
This is the Chebotarev density theorem in the $l$-adic setting; see
\cite[Corollary~6.10]{Serrebook}.
\end{proof}

For $\sigma\in\Gal(FL/L)$, for every $\pp\in S_\sigma$, we have $\Fr_{\pp} \mid_F=\iota(\sigma)$. Hence
$d=\operatorname{ord}\bigl(\iota(\sigma)\bigr)$, which is  constant as $\pp$ varies in $S_\sigma$. Hence, we can define the conjugacy invariant algebraic function $b_{\sigma}$ on $G_{X,l}^{(\sigma)}$ as follows:
$$b_{\sigma}:= \Tr( - \mid \wedge^d H^2(X_{\overline{L}},\Q_l) \otimes \chi ^{-d}):G^{(\sigma)}_{X,l} \to \Q_l.$$
In particular, for every $\pp\in S_\sigma$, one has
$b_\sigma\bigl(\rho_{X,l}(\Fr_\pp)\bigr)
=a_\pp.$

\begin{proposition}\label{prop:density-bound}
The density of the set of non-$\mu$-ordinary primes satisfies
\[
\rho(S[\nu]:S)
\le
\frac{
\#\left\{
\sigma\in\Gal(FL/L):
b_\sigma\text{ is constant on }G_{X,l}^{(\sigma)}
\right\}
}{
[FL:L]
}.
\]
In particular, if $b_\sigma$ is nonconstant on $G_{X,l}^{(\sigma)}$ for
every $\sigma\in\Gal(FL/L)$, then the set of non-$\mu$-ordinary primes
has density zero.
\end{proposition}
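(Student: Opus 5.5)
The plan is to decompose along the partition $S=\coprod_{\sigma}S_\sigma$ and treat each class separately. By the Chebotarev density theorem, $\rho(S_\sigma:S)=1/[FL:L]$, so
\[
\rho(S[\nu]:S)=\sum_{\sigma\in\Gal(FL/L)}\frac{1}{[FL:L]}\,\rho(S_\sigma[\nu]:S_\sigma).
\]
We bound $\rho(S_\sigma[\nu]:S_\sigma)$ trivially by $1$ when $b_\sigma$ is constant on $G_{X,l}^{(\sigma)}$. It then suffices to show that $\rho(S_\sigma[\nu]:S_\sigma)=0$ whenever $b_\sigma$ is nonconstant. (If one prefers upper densities, the same argument applies with $\limsup$, and it gives the stated inequality.)

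Fix $\sigma$ with $b_\sigma$ nonconstant, and let $d=\operatorname{ord}(\iota(\sigma))$. By Proposition~\ref{detect-mu-ord}, every $\pp\in S_\sigma[\nu]$ satisfies
\[
b_\sigma(\rho_{X,l}(\Fr_\pp))=a_\pp\in Z:=\Z\cap\bigl[-\tbinom{22}{d},\tbinom{22}{d}\bigr],
\]
which is a finite set. Hence $S_\sigma[\nu]$ is contained in $\{\pp\in S_\sigma:\rho_{X,l}(\Fr_\pp)\in A\}$, where $A:=\Gamma_\sigma\cap b_\sigma^{-1}(Z)$. This set $A$ is conjugacy invariant, since $b_\sigma$ is a trace (up to the central twist $\chi^{-d}$) and $Z$ is fixed. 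It is also closed in the $l$-adic topology, being the preimage of a finite set under a continuous function.

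The key step is to show that $\mu_\sigma(A)=0$ and $\mu_\sigma(\partial A)=0$, after which Lemma~\ref{lem:chebotarev-haar} gives the density $0$. Since $A$ is closed, $\partial A\subseteq A$, so it is enough to show $\mu_\sigma(A)=0$. For each $z\in Z$, the set $b_\sigma^{-1}(z)$ is a Zariski-closed subset of $G_{X,l}^{(\sigma)}$. This component is irreducible, being a translate of the connected group $G_{X,l}^\circ$, and $b_\sigma$ is nonconstant on it, so $b_\sigma^{-1}(z)$ is a proper Zariski-closed subset of strictly smaller dimension. Because $\Gamma_\sigma$ is a coset of an open compact subgroup of $G_{X,l}^\circ(\Q_l)$ that is Zariski dense in the component, $\Gamma_\sigma$ is an $l$-adic analytic manifold of full dimension $\dim G_{X,l}^\circ$, and $\mu_\sigma$ is a smooth measure on it. The zero locus of a nonzero analytic (indeed polynomial) function on such a manifold has measure zero. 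Summing over the finitely many $z\in Z$ gives $\mu_\sigma(A)=0$.

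I expect the main obstacle to be this measure-zero claim. One has to pass from the Zariski density of $\Gamma$ to the statement that $b_\sigma-z$ does not vanish identically on $\Gamma_\sigma$, and hence not on any open subset of it. Identity-theorem issues in the $l$-adic analytic setting can be handled by noting that a polynomial vanishing on a nonempty open subset of $\Gamma_\sigma$ vanishes on its Zariski closure, which is a union of components. Then Serre's result that proper algebraic subsets of an $l$-adic Lie group have Haar measure zero (as used in \cite{Serrebook}) applies directly.
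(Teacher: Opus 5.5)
Your proposal is correct and follows essentially the paper's route. You partition $S$ by the Frobenius class in $\Gal(FL/L)$ and use Proposition~\ref{detect-mu-ord} to trap $\rho_{X,l}(\Fr_\pp)$ for $\pp\in S_\sigma[\nu]$ in $\Gamma_\sigma\cap b_\sigma^{-1}(\text{finite set})$. You then conclude, via Lemma~\ref{lem:chebotarev-haar} and the Zariski density of $\Gamma_\sigma$ in the irreducible component $G_{X,l}^{(\sigma)}$, that this set has Haar measure zero when $b_\sigma$ is nonconstant.

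Your version is slightly leaner in three small ways:
\begin{itemize}
\item You handle the boundary by noting the set is closed, so its boundary lies inside it.
\item You omit the paper's converse direction, that a constant $b_\sigma$ takes an integer value in $[-\binom{22}{d},\binom{22}{d}]$, which the inequality does not need.
\item Your finite-index/Zariski-closure argument already gives measure zero, so the openness of $\Gamma$ in $G_{X,l}^\circ(\Q_l)$ that you assert is never actually needed.
\end{itemize}
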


\begin{proof}
We have that $\rho(S[\nu]:S)=\sum_{\sigma \in \Gal(FL/L)}\rho(S_{\sigma}[\nu]:S_{\sigma})
\rho(S_{\sigma}:S)=\sum_{\sigma \in \Gal(FL/L)}\frac{\rho(S_{\sigma}[\nu]:S_{\sigma})}{[FL:L]}$. Hence, we need to show that

\[
\rho\bigl(S_\sigma[\nu]:S_\sigma\bigr)
\le
\begin{cases}
1,
&
\text{ if $b_\sigma$ is constant on }G_{X,l}^{(\sigma)},\\[1mm]
0,
&
\text{ if $b_\sigma$ is nonconstant on }G_{X,l}^{(\sigma)}.
\end{cases}
\]
Fix $\sigma\in\Gal(FL/L)$ and set $C=
\binom{22}{d}$.
Consider the conjugacy-invariant closed subset
\[
Z'_\sigma
:=
\bigcup_{n\in[-C,C]\cap\Z}
b_\sigma^{-1}(n)
\subseteq
G_{X,l}^{(\sigma)}.
\]
And let $Z_{\sigma}=Z_{\sigma}' \cap \Gamma_{\sigma}$.
By Proposition~\ref{detect-mu-ord}, for every $\pp\in S_\sigma[\nu]$, we have $\rho_{X,l}(\Fr_{\pp}) \in  Z_\sigma$. Since $Z_\sigma$ is $l$-adic analytic, we know that the boundary of $Z_\sigma$ has $\mu_{\sigma}$ measure zero. Hence,
by Lemma~\ref{lem:chebotarev-haar} we have 
$\rho\bigl(S_\sigma[\nu]:S_\sigma\bigr) \leq
\mu_\sigma\bigl(Z_\sigma\bigr)$. Since $\Gamma_{\sigma}$ is Zariski dense in $G_{X,l}^{(\sigma)}$, we have that $\mu_{\sigma}(Z_{\sigma})=\begin{cases}
1 &\text{ if $G_{X,l}^{(\sigma)} \subseteq Z'_{\sigma}$; } \\
0 &\text{ otherwise.} \\
\end{cases}$

Now we want to show that $G_{X,l}^{(\sigma)} \subseteq Z'_{\sigma}$ if and only if $b_{\sigma}$ is constant on $G_{X,l}^{(\sigma)}$. 

Suppose $G_{X,l}^{(\sigma)} \subseteq Z'_{\sigma}$. Since $G_{X,l}^{(\sigma)}$ is connected, there exists $n \in [-C,C]$ such that $G_{X,l}^{(\sigma)} \subseteq b_{\sigma}^{-1}(n)$. Hence, $b_{\sigma}$ is constant. 

Conversely, if $b_\sigma$ is constant on $G_{X,l}^{(\sigma)}$, with 
constant value $c$. It suffices to show that $c \in [-C,C] \cap \Z$. Write $c=\frac{x}{y}$ such that $\gcd(x,y)=1.$  By definition of $b_{\sigma}$, we have that $c= \frac{b}{p^d}$ where recall that $b=\text{Tr}(\Fr_\mathfrak{p} \mid \wedge^d H^2(X_{\overline{L}},\Q_l))$. Hence, $y \mid p^d$, and this holds for every $\pp\in S_\sigma$ and $p=\pp \cap \Z$. Therefore, since $S_{\sigma}$ has positive density, we can find $\pp, \pp' \in S_{\sigma}$ such that $p \neq p'$. Hence $y \mid p^d$ and $y \mid p'^d$ for two distinct primes $p,p'$. This implies that $y=1$, hence $c$ is an integer. Moreover, the Weil bound $|b| \leq p^d C$ gives  $c \in [-C,C] \cap \Z$. This finishes the proof.
\end{proof}

\section{Computation on the orthogonal group}\label{sec: orthogonal group}
In this section, we fix a connected component $G_{X,l}^{(\sigma)}$ represented by $\sigma \in \Gal(FL/L)$ and show that the trace function
$b_\sigma$ is non-constant. By Proposition
\ref{prop:density-bound}, this will imply that the set of
non-$\mu$-ordinary primes has density zero in $L$. To prove the nonconstancy of
$b_\sigma$, we realize $G_{X,l}^{(\sigma)}$ as an explicit coset of matrices
and compute $b_\sigma$ on this coset.

Recall from Corollary \ref{anothergroup} that
$G_{X,l}^{\circ} \simeq \GO_F(V_l,\Psi_l)^{\circ}$. Since extending the coefficient field does not affect whether $b_\sigma$
is constant, we work over $\overline{\Q}_l$ for the remainder of this
section. Set
$$
G:=\text{O}(V_l,\Psi_l)\otimes \overline{\Q}_l,
\qquad
G_F:=
\ker\left(
\alpha:\GO_F(V_l,\Psi_l)^{\circ}\to \Q_l^{\times}\right)
\otimes \overline{\Q}_l
$$
where $\alpha$ is the orthogonal similitude character. In particular,
$G_F\subseteq G_{X,l}^{\circ}\otimes \overline{\Q}_l$.

If $B_\sigma$ is a representative of the connected component
$G_{X,l}^{(\sigma)}$, then
$B_\sigma G_F\subseteq G_{X,l}^{(\sigma)}$.
Hence, it suffices to prove that $b_\sigma$ is nonconstant on $B_\sigma G_F$. This is analogous to the restriction to the
similitude-one subgroup in the proof of \cite[Theorem 4.19]{serre2}.


Recall from Notation \ref{notation: m and n} that
$m=[F_0:\Q]$ and $2mn=\dim_\Q(V)$.

When $F$ is CM, then $[F:\Q]=2m$. For each
$\sigma_i \in \Hom(F_0, \Qlb)$, let
$\tau_i,\tau_i^*\in \Hom(F, \Qlb)$ be the two extensions of $\sigma_i$. The $F$-action gives a decomposition
$$
V_l\otimes_{\Q_l}\overline{\Q}_l
=
\bigoplus_{i=1}^{m}
\left(
V_l^{\tau_i}\oplus V_l^{\tau_i^*}
\right),
$$
where each $V_l^{\tau_i}$ and $V_l^{\tau_i^*}$ has dimension $n$. Since the pairing $\Psi_l$ is $F$-linear, we see that $V_l^\tau$ pairs nontrivially only with
$V_l^{\tau^*}$. Hence, after choosing
suitable bases, for each $1\leq i\leq m$, we may write
$
\Psi_l\big|_{V_l^{\tau_i}\oplus V_l^{\tau_i^*}}
=
\begin{pmatrix}
0 & J_i\\
J_i & 0
\end{pmatrix},
$ where $J_i$ is some invertible diagonal matrix.

When $F$ is totally real, then
$$
V_l\otimes_{\Q_l}\overline{\Q}_l
=
\bigoplus_{\tau\in \Hom(F, \Qlb)}V_l^\tau,
$$
where each $V_l^\tau$ has dimension $2n$. In this case, the character
spaces are mutually orthogonal with respect to $\Psi_l$, and the
restriction
$\Psi_l\big|_{V_l^\tau}
$ is a nondegenerate symmetric form on $V_l^\tau$.

The preceding decompositions give the following block matrix description
of $G_F$.

\begin{lemma}\label{lem: block matrix form of GF}
After passing to $\overline{\Q}_l$, the following hold.
\begin{enumerate}
    \item If $F$ is CM, then
    $$
    G_F\simeq \prod_{i=1}^{m}\GL_n
    $$
    where the isomorphism is given by
    $
    \diag \left(M_1,M_1^*,\ldots,M_{m},M_{m}^*
    \right) \longmapsto(M_1,\ldots,M_{m})$
    with
    $
    M_i^*
    :=
    J_i^{-1}(M_i^{-1})^\top J_i$.

    \item If $F$ is totally real, then
    $$G_F\simeq
    \prod_{i=1}^{m}\SO_{2n}
    $$
    where the isomorphism is given by
    $
    \diag(M_1,\ldots,M_m)\longmapsto(M_1,\ldots,M_m)
    ,
    $
    with
    $
    M_i^\top\Psi_l^{\tau_i}M_i
    =
    \Psi_l^{\tau_i}
    $.
\end{enumerate}
\end{lemma}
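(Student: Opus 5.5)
The plan is to compute $G_F$ directly as the centralizer of $F\otimes\overline{\Q}_l$ inside $G=\mathrm{O}(V_l,\Psi_l)\otimes\overline{\Q}_l$, and then take identity components.

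Recall that $G_F$ is the identity component of the similitude-one part of $\GO_F$. First I would note that an element of $\GO_F(V_l,\Psi_l)$ commutes with $F$. After base change to $\overline{\Q}_l$, the algebra $F\otimes\overline{\Q}_l\cong\prod_{\tau}\overline{\Q}_l$ splits. Commuting with it is therefore equivalent to preserving each eigenspace $V_l^{\tau}$. So an element $g$ of $\GO_F(V_l,\Psi_l)$ has the block-diagonal form $\diag(g_\tau)_\tau$, with respect to the decompositions written just before the lemma. Restricting to similitude factor $1$ means imposing $\Psi_l(gu,gv)=\Psi_l(u,v)$ blockwise.

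\emph{CM case.} Since $V_l^{\tau}$ pairs nontrivially only with $V_l^{\tau^*}$, the orthogonality condition couples only the blocks $g_{\tau_i}=M_i$ and $g_{\tau_i^*}=N_i$. Writing out
\[
\begin{pmatrix} M_i & 0\\ 0 & N_i\end{pmatrix}^{\top}\begin{pmatrix}0&J_i\\ J_i&0\end{pmatrix}\begin{pmatrix} M_i & 0\\ 0 & N_i\end{pmatrix}=\begin{pmatrix}0&J_i\\ J_i&0\end{pmatrix}
\]
gives $M_i^\top J_i N_i=J_i$, hence $N_i=J_i^{-1}(M_i^{-1})^\top J_i=M_i^*$. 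The other block equation is the transpose of this one, using that $J_i$ is diagonal, hence symmetric. Thus the similitude-one centralizer is $\{\diag(M_1,M_1^*,\dots,M_m,M_m^*)\}\cong\prod_i\GL_n$, and the projection $g\mapsto(M_1,\dots,M_m)$ is an isomorphism of algebraic groups. Since $\GL_n$ is connected, this group is already connected, so passing to the identity component changes nothing.

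\emph{Totally real case.} Here the eigenspaces $V_l^{\tau_i}$ are mutually orthogonal, so the orthogonality condition decouples into $M_i^\top\Psi_l^{\tau_i}M_i=\Psi_l^{\tau_i}$ for each $i$. The similitude-one centralizer is therefore $\prod_i\mathrm{O}(V_l^{\tau_i},\Psi_l^{\tau_i})\cong\prod_i\mathrm{O}_{2n}$. Its identity component is $\prod_i\SO_{2n}$, since $\mathrm{O}_{2n}$ has identity component $\SO_{2n}$ and a product of groups has the product of the identity components as its identity component.

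The one point I expect to need care is the compatibility of the two ways of taking identity components. The group $G_F$ is defined as the kernel of $\alpha$ on $\GO_F^{\circ}$, not as the identity component of $\ker\alpha$ on $\GO_F$. I would check that $\ker(\alpha|_{\GO_F^\circ})$ is connected and equals $(\ker\alpha)^\circ$.
\begin{itemize}
\item In the CM case, $\GO_F\cong\mathbb G_m\cdot\prod_i\GL_n$ is itself connected, so there is no issue.
\item In the totally real case, $\GO_F^\circ$ is $\mathbb G_m\cdot\prod_i\SO_{2n}$ modulo $\pm1$. The kernel of $\alpha$ on it consists of the elements $\lambda h$ with $h\in\prod_i\SO_{2n}$ and $\lambda^2=1$. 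Because $-1\in\SO_{2n}$ (the dimension $2n$ is even, which is where (A2) enters), $\lambda h$ already lies in $\prod_i\SO_{2n}$. Hence the kernel is exactly $\prod_i\SO_{2n}$, which is connected.
\end{itemize}
This gives the stated isomorphisms.
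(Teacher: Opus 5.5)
Your proposal is correct and follows the same route as the paper, which gives no separate proof and reads the lemma off directly from the eigenspace decomposition of $V_l\otimes\overline{\Q}_l$ and the block shape of $\Psi_l$ (the $\begin{pmatrix}0&J_i\\ J_i&0\end{pmatrix}$ pairing in the CM case, mutually orthogonal $V_l^{\tau}$ in the totally real case). Your explicit check that $\ker(\alpha|_{\GO_F^{\circ}})$ equals $\prod_i\SO_{2n}$ supplies a detail the paper leaves implicit. That check uses $-I_{2n}\in\SO_{2n}$, which is where the evenness assumption in (A2) enters; it is also what accounts for $\SO_{2n}$ rather than $\mathrm{O}_{2n}$ in the statement.
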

Next we consider the Weyl group of $G_F$ and $G$.
Let $T$ be the maximal torus of $G_F$ given by
$$
T=\left\{\diag(T_1,T_1^{-1},\dots,T_m,T_m^{-1}) \,\middle|\, \text{$T_i$ is diagonal of size $n$}\right\}.
$$
Since $T$ has rank $nm$, it is also a maximal torus of $G$.

We write $W_F$ for the Weyl group of $G_F$ with respect to $T$, and $\widetilde{W}$ for the extended Weyl group of $G$ with respect to $T$. That is,
$$
\tW:=N_G(T)/T, \qquad W_F:=N_{G_F}(T)/T
$$
We also set
$$
H:=(N_G(G_F)\cap N_G(T))/T.
$$
We have the following diagram for the inclusion relation among these various groups:
\begin{figure}[h]
\centering
\begin{tikzpicture}[every node/.style={inner sep=1pt}]
\node (G) at (0,0) {$G$};

\node (NGGF) at (-1.8,-1) {$N_G(G_F)$};
\node (NGT) at (1.8,-1) {$N_G(T)$};

\node (GF) at (-3,-2) {$G_F$};
\node (I) at (0.8,-2) {$N_G(T)\cap N_G(G_F)$};

\node (NGFT) at (-1.1,-3) {$N_{G_F}(T)$};

\node (T) at (-1.1,-4) {$T$};

\draw (G) -- (NGGF);
\draw (G) -- (NGT);
\draw (NGGF) -- (GF);
\draw (NGGF) -- (I);
\draw (NGT) -- (I);
\draw (GF) -- (NGFT);
\draw (I) -- (NGFT);
\draw (NGFT) -- (T);
\end{tikzpicture}
\label{fig:normalizer-diagram}
\end{figure}

In particular, we have 
\[N_{G_F}(T) \subseteq N_G(T) \cap N_G(G_F) \subseteq N_G(T)\] Passing to the quotients by $T$ we get that $W_F \subseteq H \subseteq \tW$.

The extended Weyl group $\tW$ can be realized as a subgroup of $\mathfrak{S}_{2mn}$. More precisely, we identify
$$
\mathfrak{S}_{2mn}=\Sym\{\pm 1,\pm 2,\dots,\pm mn\},
$$
where the subset $\{\pm(in-n+1),\dots,\pm in\}$ corresponds to the pair $(T_i,T_i^{-1})$. Then
\begin{equation}\label{eq: Weyl G}
\tW=\left\{\pi \in \Sym\{\pm 1,\dots,\pm mn\} \,\middle|\,
\pi(-i)=-\pi(i)\right\}.
\end{equation}
And $W_F$ can be identified as a subgroup of $\tW$. By Lemma \ref{lem: block matrix form of GF},
\begin{itemize}
\item if $F$ is CM, then $\prod_{i=1}^m \mathfrak{S}_n \cong W_F$, with the isomorphism given by
$(\pi_1,\dots,\pi_m)\mapsto (\pi_1,\pi_1,\dots,\pi_m,\pi_m)$.
\item if $F$ is totally real, then
$\prod_{i=1}^m W_{\SO_{2n}} \cong W_F$.
\end{itemize}

The next proposition is an adaptation of \cite[Proposition 4.14]{serre2}.

\begin{proposition}\label{prop: quotient map}
We have a natural isomorphism $$N_{G}(G_F)/G_F \simeq H/W_F.$$
\end{proposition}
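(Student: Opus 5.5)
The plan is to construct the map
\[
N_G(T)\cap N_G(G_F)\longrightarrow N_G(G_F)/G_F
\]
induced by inclusion, and then check that it is surjective with kernel $N_{G_F}(T)$. Since $T$ is contained in both $N_G(T)\cap N_G(G_F)$ and $N_{G_F}(T)$, this gives
\[
N_G(G_F)/G_F\;\simeq\;\bigl(N_G(T)\cap N_G(G_F)\bigr)/N_{G_F}(T)\;\simeq\;H/W_F .
\]
The last step is the third isomorphism theorem, since $N_{G_F}(T)/T = W_F$ is normal in $H$.

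For the kernel, an element of $N_G(T)\cap N_G(G_F)$ lies in $G_F$ exactly when it lies in $G_F\cap N_G(T)=N_{G_F}(T)$, so the kernel is $N_{G_F}(T)$.

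Surjectivity is the main point, and I will prove it by a Frattini argument. Take $g\in N_G(G_F)$. Conjugation by $g$ preserves $G_F$, so $gTg^{-1}$ is a maximal torus of $G_F$; it is maximal because $\dim T=nm$ is the rank of $G_F$ by Lemma~\ref{lem: block matrix form of GF}. The group $G_F$ is connected: over $\overline{\Q}_l$ it is $\prod\GL_n$ or $\prod\SO_{2n}$ by Lemma~\ref{lem: block matrix form of GF}. Since all maximal tori in a connected algebraic group over an algebraically closed field are conjugate, there is $h\in G_F$ with $hgTg^{-1}h^{-1}=T$. Then $hg$ lies in $N_G(T)$, and it also lies in $N_G(G_F)$ because both $h$ and $g$ do. Moreover $hg$ and $g$ have the same class modulo $G_F$, which proves surjectivity.

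One point needs care: $G_F$ is defined as the kernel of the similitude character on $\GO_F^\circ$, base-changed to $\overline{\Q}_l$. I need this to be connected and reductive of rank $nm$, and Lemma~\ref{lem: block matrix form of GF} supplies exactly that. Everything else is formal group theory, so I expect the conjugacy-of-tori step, together with confirming that $T$ is maximal in $G_F$, to be the only place where any content is needed. The isomorphism is natural because it is induced by inclusions.
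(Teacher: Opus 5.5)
Your proof is correct and is essentially the paper's argument. Both proofs use conjugacy of maximal tori in the connected group $G_F$ to move any $g\in N_G(G_F)$ into $N_G(T)\cap N_G(G_F)$ modulo $G_F$, and both identify the kernel as $N_{G_F}(T)$. The only difference is that the paper defines the map in the opposite direction, $g\mapsto g_0^{-1}gW_F$ from $N_G(G_F)$ to $H/W_F$, whereas your map is induced by inclusion, so you get its well-definedness and homomorphism property for free.
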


\begin{proof}
We define a map from $N_{G}(G_F)$ to $H/W_{F}$ whose kernel is $G_F$. Let $g \in N_{G}(G_F)$, and consider $g T g^{-1}$, which is a maximal torus of $G_F$. Since maximal tori are conjugate, there exists $g_0 \in G_F$ such that $g T g^{-1} = g_0 T g_0^{-1}$, where $g_0$ is unique up to $N_{G_F}(T)$. It follows that $g_0^{-1} g$ is in $N_{G}(G_F) \cap N_{G}(T)$, so the map 
\begin{align*}
\begin{split}
N_{G}(G_F) & \to H/W_F \\
g &\mapsto g_0^{-1} g W_F
\end{split}
\end{align*}
 is well-defined. It is surjective since $N_{G}(G_F) \cap N_{G}(T) \subseteq N_{G}(G_F)$. The kernel of this map consists of $g$ such that $g_0^{-1} g \in N_{G_F}(T)$, or $g \in G_F.$
\end{proof}

The next lemma is an adaptation of \cite[Lemma 4.15]{serre2}.
\begin{lemma}\label{lem: H q Wf iso}
We have an isomorphism $$H/W_{F} \simeq \{\pm 1\}^m \rtimes \mathfrak{S}_m.$$
\end{lemma}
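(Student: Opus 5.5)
The plan is to identify $H$ concretely inside the signed permutation group $\tW$ of \eqref{eq: Weyl G}, and then divide by $W_F$. For $1\le i\le m$ let $B_i:=\{in-n+1,\dots,in\}$. Under the identification of $\mathfrak{S}_{2mn}$ with $\Sym\{\pm1,\dots,\pm mn\}$ the sets $B_i$ and $-B_i$ index the following subspaces:
\begin{itemize}
\item in the CM case, $B_i$ indexes $V_l^{\tau_i}$ and $-B_i$ indexes $V_l^{\tau_i^*}$;
\item in the totally real case, $\pm B_i$ together index $V_l^{\tau_i}$.
\end{itemize}
The key claim is
\[
H=\{\pi\in\tW \mid \text{for each } i \text{ there are } j \text{ and } \varepsilon\in\{\pm1\} \text{ with } \pi(B_i)=\varepsilon B_j\}.
\]
Granting this, $H\cong W_{B_n}\wr\mathfrak{S}_m=(W_{B_n})^m\rtimes\mathfrak{S}_m$, where $W_{B_n}\cong\{\pm1\}^n\rtimes\mathfrak{S}_n$ acts inside one block. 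Both inclusions for the claim are argued below.

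For the inclusion "$\subseteq$", take $w\in N_G(G_F)\cap N_G(T)$. Since $w$ normalizes $G_F$, it carries the $G_F$-isotypic decomposition of $V_l\otimes\overline{\Q}_l$ onto the isotypic decomposition for $wG_Fw^{-1}=G_F$. So it suffices to show that the summands in Lemma \ref{lem: block matrix form of GF} are exactly the isotypic components.
\begin{itemize}
\item In the CM case, $V_l^{\tau_i}$ and $V_l^{\tau_i^*}$ are the standard representation and its dual for the $i$-th factor $\GL_n$. Other factors act trivially on them. They are therefore irreducible and pairwise non-isomorphic; this holds even for $n=1$, since $t$ and $t^{-1}$ are distinct characters.
\item In the totally real case, $V_l^{\tau_i}$ is the standard representation of the $i$-th factor $\SO_{2n}$. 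This is irreducible once $2n\ge3$. Here I use Zarhin's result that in the totally real case $\dim_F T(X)_\Q\ge3$; combined with (A2) this gives $n\ge2$. This point needs to be checked carefully.
\end{itemize}
Since $w$ also normalizes $T$, its image in $\tW$ maps the coordinate set of each summand onto the coordinate set of another summand. Orthogonality of $w$ then forces pairs $\{B_i,-B_i\}$ to go to pairs, which gives the displayed form of $\pi$.

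For the inclusion "$\supseteq$", I exhibit explicit lifts in $N_G(T)$ of each type of element.
\begin{itemize}
\item Block permutations: a permutation matrix sending the $i$-th block to the $j$-th block, after suitably rescaling the bases so that $J_i$ (resp. $\Psi_l^{\tau_i}$) are matched. Such a matrix lies in $G$ and conjugates $\prod\GL_n$ (resp. $\prod\SO_{2n}$) to itself.
\item Sign changes in the CM case: the swap $V_l^{\tau_i}\leftrightarrow V_l^{\tau_i^*}$, i.e.\ the matrix $\begin{pmatrix}0&J_i^{-1}\\ J_i&0\end{pmatrix}$ on that block up to normalization. It is orthogonal and induces $M_i\mapsto M_i^*$ on the $i$-th factor.
\item Sign changes in the totally real case: any element of $\mathrm{O}_{2n}\setminus\SO_{2n}$ normalizing the torus, which normalizes $\SO_{2n}$.
\end{itemize}

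Finally I compute the quotient. In the CM case $W_F=(\mathfrak{S}_n)^m$, embedded diagonally on $\pm B_i$; it is normal in $H$, and
\[
H/W_F\cong\bigl((\{\pm1\}^n\rtimes\mathfrak{S}_n)/\mathfrak{S}_n\ \text{ blockwise}\bigr)\rtimes\mathfrak{S}_m .
\]
Here only the global sign of each block survives, because inside a block the elements of $W_F$ act by $\pi(-k)=-\pi(k)$ with $\pi$ preserving $B_i$. So the quotient is $\{\pm1\}^m\rtimes\mathfrak{S}_m$. In the totally real case $W_F=(W_{D_n})^m$, and $[W_{B_n}:W_{D_n}]=2$. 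This again gives $\{\pm1\}^m\rtimes\mathfrak{S}_m$, where the $i$-th sign records whether the block component has an odd number of sign changes.

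I expect the main obstacle to be the "$\subseteq$" inclusion. One has to justify rigorously that normalizing $G_F$ forces preservation of the eigenspace decomposition, which needs the isotypic components to be irreducible and distinct. One also has to see that the CM-case element must send $B_i$ to $\pm B_j$ as a whole, rather than mixing coordinates of $V^{\tau_i}$ and $V^{\tau_j^*}$ inconsistently.
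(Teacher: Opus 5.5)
Your strategy matches the paper's: conjugation by $h\in H$ permutes the blocks of $G_F$ and possibly swaps pairs, and then you divide by $W_F$. Your isotypic-component argument for ``$\subseteq$'' and your explicit lifts for ``$\supseteq$'' supply justification that the paper leaves out. However, you have swapped the descriptions of $H$ between the two cases, and as a result two steps are false as written.

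\textbf{Totally real case.} Here $\pm B_i$ jointly index $V_l^{\tau_i}$. Your isotypic argument therefore only gives $\pi(B_i\cup -B_i)=B_j\cup -B_j$, not $\pi(B_i)=\ep B_j$. The displayed claim in fact fails. Take the single sign change $k\mapsto -k$ with $k\in B_i$; this is the lift of an element of $\mathrm{O}_{2n}\setminus\SO_{2n}$ that you use yourself for ``$\supseteq$''. It lies in $H$, but for $n\ge 2$ it sends $B_i$ to a set that is neither $B_j$ nor $-B_j$. For $n\ge 3$, double sign changes in $W_F=(W_{D_n})^m$ already violate the claim. The correct group is $H=W_{B_n}\wr\Sym_m$, and with it your quotient $(W_{B_n}/W_{D_n})^m\rtimes\Sym_m\cong\{\pm1\}^m\rtimes\Sym_m$ is right. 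Your worry about $n\ge 2$ is unnecessary: Zarhin's bound $\dim_F T(X)_{\Q}\ge 3$ together with evenness gives $\dim_F T(X)_{\Q}\ge 4$.

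\textbf{CM case.} Here your displayed claim is the correct one, but then $H$ is not $W_{B_n}\wr\Sym_m$. A single sign change at $k\in B_i$ exchanges the weight line $e_k\subset V_l^{\tau_i}$ with the paired line in $V_l^{\tau_i^*}$. For $n\ge 2$, which the non-CM hypothesis guarantees, no lift of it preserves the isotypic decomposition, so it is not in $H$. Hence $H=(\{\pm1\}\times\Sym_n)\wr\Sym_m$.

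Your quotient step does not work. You write ``$(\{\pm1\}^n\rtimes\Sym_n)/\Sym_n$ blockwise; only the global sign survives because elements of $W_F$ preserve $B_i$.'' But $\Sym_n$ is not normal in $W_{B_n}$ for $n\ge 2$, the coset space has $2^n$ elements, and an element of $W_F$ that preserves $B_i$ can never cancel an individual sign flip. Only the global sign survives because individual flips are excluded from $H$ from the start, which is exactly what your own isotypic argument shows.

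With $H$ identified correctly in each case, $H/W_F\cong\{\pm1\}^m\rtimes\Sym_m$ follows at once. This agrees with the paper's parametrizations $(\ep_1,\dots,\ep_m;\tau;\pi_1,\dots,\pi_m)$ and $(\ep_1,\dots,\ep_m;\tau;w_1,\dots,w_m)$ in the two cases.
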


\begin{proof}
    \begin{enumerate}
        \item If $F$ is CM,  then by Lemma \ref{lem: block matrix form of GF}, $G_F \cong \prod_{m}\GL_n$, where the $i$-th copy of $\GL_n$ corresponds to the $i$-th block $(M_i,M_i^*)$ in $G_F$. Since $H$ normalizes $G_F$, conjugation by $h \in H$ permutes the blocks of $G_F$ and possibly switches the pairs. Hence, $h \in H$ can be represented by $h=(\ep_1, \dots, \ep_m;\tau;\pi_1, \dots, \pi_m)$, where $\ep_i \in \{\pm 1\}, \pi_i \in \Sym_n, \tau \in \Sym_m$. Since $W_{F}=\prod_m \Sym_n$, we get that $H/W_{F}=\{(\ep_1, \dots, \ep_m;\tau) \mid \ep_i \in \{\pm 1\}, \tau \in \Sym_m\}$ as desired. 
        \item If $F$ is totally real, then, by Lemma \ref{lem: block matrix form of GF}, $G_F \cong \prod_{m}\SO_{2n}$. Since $H$ normalizes $G_F$, $h \in H$ can be represented by $(\ep_1, \dots, \ep_m;\tau;w_1, \dots, w_m)$ where $\ep_i \in \{\pm 1\}, w_i \in W_{\SO_{2n}}, \tau \in \Sym_m$. Since $W_{F}=\prod_m W_{\SO_{2n}}$, we get that $H/W_{F}$ is as desired. 
    \end{enumerate}
\end{proof}

Fix a component $G_{X,l}^{(\sigma)}$ and let 
$B_\sigma\in G_{X,l}^{(\sigma)}$ be the representative. After multiplying $B_\sigma$ by a
suitable scalar in $G_{X,l}^{\circ}$, we may assume that
$B_\sigma\in G$. Since $G_F$ is the similitude-one subgroup of
$G_{X,l}^{\circ}$, it is normalized by $G_{X,l}$, and hence
$B_\sigma\in N_G(G_F)$ and
$
B_\sigma G_F\subseteq G_{X,l}^{(\sigma)}$. By Proposition \ref{prop: quotient map}, we can find $B_{\sigma}' \in N_G(T) \cap N_G(G_F)$ such that $B_{\sigma}'G_F=B_{\sigma}G_F$. 
The class $B_\sigma'T\in H$
determines an element
$B_\sigma'T W_F\in H/W_F$. Let $P_{\sigma}\in \{\pm1\}^m \rtimes \Sym_m$ be the image of $B_\sigma'T W_F$ under the isomorphism in Lemma \ref{lem: H q Wf iso}. Thus
$B_\sigma'T\,W_F=P_\sigma T\,W_F$. Hence, for some $n_\sigma\in N_{G_F}(T)$,
we have $n_\sigma^{-1}B_\sigma'T=P_\sigma T$. Therefore, for some $T_\sigma\in T$, we have $n_\sigma^{-1}B_\sigma'=P_\sigma T_\sigma$. 
Since $n_\sigma\in G_F$ and $B_\sigma'$ normalizes $G_F$,
we have 
$B_\sigma'G_F= B_\sigma'(B_\sigma'^{-1}n_{\sigma}^{-1}B_{\sigma}')G_F=n_\sigma^{-1}B_\sigma'G_F
=P_\sigma T_\sigma G_F$. 

Thus we may choose $B_{\sigma}=P_{\sigma}T_{\sigma}$ where
$P_\sigma\in\{\pm1\}^m\rtimes\Sym_m$ and $T_\sigma\in T$, such that
$B_\sigma G_F\subseteq G_{X,l}^{(\sigma)}$. 

For later use, we describe more explicitly the permutation represented by $P_{\sigma}$. Recall that we have the following commutative diagram:
\[
\begin{tikzcd}
    & \Gal(FL/L) \ni \sigma \arrow[d, two heads, "\kappa"] \arrow[r, hook, "\iota"] & \Gal(F/\Q) \\
    \pi_0(G_{X,l}) \arrow[r, leftarrow, "\simeq"', "\rho_{X,l}"] & \Gal(\Lconn/L).
\end{tikzcd}
\]
For any $\sigma \in \text{Gal}(FL/L)$, its image in $\pi_0(G_{X,l})$ is $\rho_{X,l}(\kappa(\sigma))$, while its image in $\text{Gal}(F/\Q)$ is $\iota(\sigma)$. Recall that $K=F^{\langle \isig \rangle}$, so $d=[F:K]$ is equal to the order of $\iota(\sigma)$ in $\text{Gal}(F/\Q)$.

Notice that $\sigma$ acts on $\Hom(F,\overline{\Q}_l)$.
For $\tau_i \in \Hom(F,\overline{\Q}_l)$, we denote the action of $\sigma$ via 
\[\sigma \cdot \tau_i=\tau_i \circ \iota(\sigma)^{-1}\]This induces the action of the component $B_\sigma$ on the set of $F$-eigenspaces by $
B_\sigma(V_l^{\tau_i})=V_l^{\sigma \cdot \tau_i}$.
Since $F/\Q$ is Galois and $\iota(\sigma)$ has order $d$, every orbit of this action
has length $d$. Hence, when viewed as a permutation in $\Sym_{2mn}$, $P_\sigma$ has cycle type $(\underbrace{d, \dots, d}_{2mn/d}).$


We now describe how to obtain an element of
$(\ep_1, \dots, \ep_m; \pi) \in \{\pm1\}^m\rtimes \Sym_m$ from $\sigma$.

\begin{itemize}
\item When $F$ is CM, write
$\sigma_i=\tau_i|_{F_0}$. Set $\sigma_0:=\isig|_{F_0}$ and $e:=\text{ord}(\sigma_0)$.
The induced permutation $\pi\in \Sym_m$ is determined by
$\sigma_{\pi(i)}=\sigma_i\circ\sigma_0^{-1}$. Since $F_0/\Q$ is Galois, every cycle of $\pi$ has length $e$, so the
cycle type of $\pi$ is
$(\underbrace{e,\ldots,e}_{m/e\text{ times}})$.

We now define $\ep_i$. For each $i$, either $
\sigma \cdot \tau_i=\tau_{\pi(i)}$ or $\tau_{\pi(i)}^*$.
We define
$$
\varepsilon_i=
\begin{cases}
1,& \text{ if } \sigma\cdot \tau_i =\tau_{\pi(i)},\\
-1,&\text{ if } \sigma \cdot \tau_i =\tau_{\pi(i)}^*.
\end{cases}
$$
Thus $P_\sigma$ corresponds to the signed permutation $(\varepsilon_1,\ldots,\varepsilon_m;\pi)
\in\{\pm1\}^m\rtimes \Sym_m$.

 Notice that $e=d$ or $\frac{d}{2}$. If $e=d$, then the $\sigma$-orbit of $\tau_i$ and $\tau_i^*$ are disjoint and each has length $d$; if $e=\frac{d}{2}$, then we have $\isig^e=c$ where $c$ is the complex conjugation, so the $\sigma$-orbit of $\tau_i$ is $\{\sigma^j \cdot \tau_i : 0 \leq j < e\}\sqcup \{\sigma^j \cdot \tau_i^* : 0 \leq j < e\} $, which is a union of two size $e$ subsets.

\item When $F$ is totally real, we have $F_0=F$, so $e=d$.
The permutation $\pi\in\Sym_m$ is similarly determined by
$\tau_{\pi(i)}=\sigma \cdot \tau_i$ for $\tau_i \in \Hom(F, \Qlb)$
and has cycle type
$(\underbrace{d,\ldots,d}_{m/d\text{ times}})$.
For each $i$, the restriction of $P_\sigma$ gives an orthogonal
isomorphism $P_\sigma|_{V_l^{\tau_i}}:
V_l^{\tau_i}\longrightarrow V_l^{\tau_{\pi(i)}}$.
With respect to the chosen compatible orthogonal bases, we define
$$
\varepsilon_i
:=
\det\left(P_\sigma|_{V_l^{\tau_i}}\right)
\in\{\pm1\}.
$$
Thus $P_\sigma$ corresponds to the signed permutation
$(\varepsilon_1,\ldots,\varepsilon_m;\pi)
\in\{\pm1\}^m\rtimes\Sym_m$. The $\sigma$-orbit of $\tau_i$ has length $d$, just as in the case where $F$ is CM and $e=d$.
\end{itemize}
\begin{proposition}\label{prop: bsigma non constant}
For any $\sigma\in\Gal(FL/L)$, the function
$$
b_\sigma=
\Tr\left(
-\,\middle|\,
\wedge^d H^2(X_{\overline L},\Q_l)\otimes\chi^{-d}
\right):
B_\sigma G_F\longrightarrow\Q_l
$$
is non-constant.
\end{proposition}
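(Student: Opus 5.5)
The plan is to compute $b_\sigma$ on $B_\sigma G_F$ directly. We use the block structure from Lemma~\ref{lem: block matrix form of GF} and the normal form $B_\sigma=P_\sigma T_\sigma$ with $P_\sigma$ the signed permutation attached to $\sigma$. Then we exhibit a one-parameter family in $G_F$ along which the trace moves.

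\textbf{Step 1: separate off the N\'eron--Severi part and the twist.} Since $B_\sigma G_F\subseteq G$ has similitude one, $\chi$ is trivial on it, so $b_\sigma$ is just the trace on $\wedge^d H^2$. Write $H^2=V_l\oplus \NS_l$. Then $G_F$ acts trivially on $\NS_l$, and $B_\sigma$ acts there by a fixed finite-order matrix $N_\sigma$. Hence
\[
b_\sigma(g)=\sum_{a=0}^{d}\Tr\bigl(g\mid \wedge^a V_l\bigr)\,\Tr\bigl(N_\sigma\mid\wedge^{d-a}\NS_l\bigr).
\]
The key structural observation concerns any $g\in B_\sigma G_F$. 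Such a $g$ maps $V_l^{\tau}$ to $V_l^{\sigma\cdot\tau}$, and all $\sigma$-orbits on $\Hom(F,\Qlb)$ have length exactly $d$. So on each orbit $\OO$, $g$ is block-cyclic with blocks $A_1,\dots,A_d$. Its characteristic polynomial there is $\pm\det(x^d-Q_\OO)$, where $Q_\OO=A_d\cdots A_1\in\End(V_l^{\tau})$ is the return map. Consequently the characteristic polynomial of $g$ on $V_l$ is a polynomial in $x^d$, so $\Tr(g\mid\wedge^a V_l)=0$ for $0<a<d$, and
\[
\Tr(g\mid\wedge^d V_l)=\pm\sum_{\OO}\Tr(Q_\OO),
\]
with a sign depending only on $d$. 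Thus $b_\sigma(g)=c_\sigma\pm\sum_\OO \Tr(Q_\OO(g))$ with $c_\sigma$ constant, and it suffices to show that $g\mapsto\sum_\OO\Tr(Q_\OO(g))$ is non-constant on $B_\sigma G_F$.

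\textbf{Step 2: vary a single block.} Write $g=B_\sigma h$ with $h=(M_1,\dots,M_m)\in G_F$. Choose $h$ supported on one index $i$, so only the orbits through $\tau_i$ (and $\tau_i^*$ in the CM case) change. In the CM case with $e=d$, the orbits of $\tau_i$ and $\tau_i^*$ are disjoint, and the return maps are $C M_i$ and $C' M_i^*$ for fixed invertible $C,C'$ coming from $P_\sigma T_\sigma$. The function $M\mapsto \Tr(CM)+\Tr(C'J^{-1}M^{-\top}J)$ is visibly non-constant on $\GL_n$, for instance by scaling $M=\lambda\cdot 1$, which gives $\lambda a+\lambda^{-1}b$ with $a=\Tr(C)\ne0$ after a harmless adjustment. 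The totally real case is analogous: there is one orbit containing $\tau_i$, with return map $CM_i$ for $M_i\in\SO_{2n}$. Since $\SO_{2n}$ spans $\End(V_l^{\tau_i})$ linearly, $M\mapsto\Tr(CM)$ cannot be constant; otherwise $\Tr(C(M-1))=0$ on a spanning set would force $C=0$.

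\textbf{Step 3: the main obstacle.} The hard case is CM with $e=d/2$, where $\tau_i$ and $\tau_i^*$ lie in the same orbit. Here restricting to the torus $T$ gives a constant, since the monomial $\prod t_j^{\pm1}$ along the cycle cancels. So one genuinely needs non-diagonal elements of $G_F$. The single return map then has the form $Q=C\,M\,D\,J^{-1}M^{-\top}J$ for fixed invertible $C,D$. I would show that $M\mapsto\Tr(CMD'M^{-\top})$ is non-constant on $\GL_n$, using $n\ge2$ from (A2). The first test is $M$ diagonal with entries $\lambda,1,\dots,1$, which yields $\Tr=\alpha+\beta\lambda+\gamma\lambda^{-1}$. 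If both $\beta$ and $\gamma$ vanish for all coordinate choices, I would then use elementary unipotent $M=1+sE_{12}$, which gives a polynomial in $s$ whose $s^2$ coefficient is an entry of $C$ times an entry of $D'$. I would argue that some such coefficient is nonzero because $C,D'$ are invertible. Combining Steps 1--3 shows $b_\sigma$ is non-constant on $B_\sigma G_F$ for every $\sigma$.
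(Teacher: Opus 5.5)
Your Steps 1 and 2 follow the paper's route. The paper gets $b_\sigma=k_\sigma+\frac{(-1)^{d-1}}{d}p_d$ from Newton's identities, which is equivalent to your observation that the characteristic polynomial on $V_l$ is a polynomial in $x^d$. Keeping the fixed factors $C,C'$ coming from $P_\sigma T_\sigma$ is, if anything, more careful than the paper, whose normalization of $P_\sigma$ makes them trivial. The gap is in Step 3, the case you yourself single out as the crux. There you assert that some $s^2$-coefficient $-C_{jj}D'_{kk}$ of $\Tr(CMD'M^{-\top})$ along $M=1+sE_{jk}$ is nonzero because $C$ and $D'$ are invertible. That is false. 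Conjugating $M\mapsto PMP^{-1}$ changes $(C,D')$ by congruence, $C\mapsto P^{\top}CP$ and $D'\mapsto P^{-1}D'P^{-\top}$. So these diagonal entries are values of the quadratic forms $v\mapsto v^{\top}Cv$ and $w\mapsto w^{\top}D'w$, which vanish identically when $C$ is alternating, and invertible alternating matrices exist for every even $n$. Concretely, for $n=2$ and $C=D'=\left(\begin{smallmatrix}0&1\\-1&0\end{smallmatrix}\right)$ one finds $\Tr(CMD'M^{-\top})=-\Tr(M^2)/\det M$, which equals $-2$ on every unipotent $M$ in every basis. In that example the diagonal test happens to succeed. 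But you never show that the diagonal test must succeed whenever the unipotent one fails, so your case split in Step 3 is not closed.

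The missing ingredient is that $B_\sigma$ is orthogonal, which ties $C$ and $D'$ together. Put $R:=B_\sigma^{e}|_{V_l^{\tau_i}}\colon V_l^{\tau_i}\to V_l^{\tau_i^*}$. Since $B_\sigma^e$ preserves $\Psi_l$, which is $\left(\begin{smallmatrix}0&J\\J&0\end{smallmatrix}\right)$ on $V_l^{\tau_i}\oplus V_l^{\tau_i^*}$, you get $B_\sigma^{e}|_{V_l^{\tau_i^*}}=R^*$, where $X^*:=J^{-1}X^{-\top}J$ is multiplicative. Hence the return map is $R^*M^*RM=(RM)^*(RM)$. With $N=RM$ ranging over $\GL_n$, this is exactly the paper's computation: $\Tr(NN^*)=n-\frac{j_1}{j_2}t^2$ for $N=\left(\begin{smallmatrix}1&t\\0&1\end{smallmatrix}\right)\oplus I_{n-2}$. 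The same argument gives $C'=C^*$ in Step 2. This turns Step 2 into the paper's $\Tr(N)+\Tr(N^*)$ with $N=\diag(t,1,\ldots,1)$ and replaces your vague ``harmless adjustment''. Alternatively, you can keep general invertible $C,D'$ and prove that the diagonal test always works in suitable coordinates. Along pseudo-reflections $M=1+(\lambda-1)uw^{\top}$ with $w^{\top}u=1$, the coefficient of $\lambda$ is $w^{\top}D'Cu-(w^{\top}D'w)(u^{\top}Cu)$. If this vanished identically, homogenizing would give $(w^{\top}D'Cu)(w^{\top}u)=(w^{\top}D'w)(u^{\top}Cu)$. Since $w^{\top}u$ is irreducible for $n\ge 2$, this forces $D'C=0$, a contradiction. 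Separately, your spanning argument in the totally real case silently needs $n\ge 2$. This holds because $\dim_F T(X)_\Q\ge 3$ for totally real $F$, together with the evenness assumption.
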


\begin{proof}
Since $T_\sigma\in T\subset G_F$, replacing $A$ by $T_\sigma A$
allows us to absorb $T_\sigma$. It therefore suffices to find a
one-parameter family $A(t)\in G_F$ such that, for $M(t):=P_\sigma A(t)$, the function
$$
b_\sigma(M(t))
=
\Tr\left(
M(t)\,\middle|\,
\wedge^d H^2(X_{\overline L},\Q_l)
\right)
$$
is nonconstant in $t$. Since we are working on the similitude-one
subgroup, we suppress the factor $\chi^{-d}$.
\begin{enumerate}
\item Suppose $F$ is CM. By Lemma~\ref{lem: block matrix form of GF},
$$
A=
\diag(A_1,A_1^*,\ldots,A_m,A_m^*),
\qquad
A_i^*=J_i^{-1}(A_i^{-1})^\top J_i.
$$
where each $A_i$ is of size $n\times n$. We take
$A(t)=
\diag(A_1(t),A_1(t)^*,\text{I}_n,\text{I}_n,\ldots,\text{I}_n,\text{I}_n)$.
Let
$$
p_k(t):=
\Tr\left(M(t)^k\,\middle|\,V_l\right), \qquad E_j(t):=
\Tr\left(M(t)\,\middle|\,\wedge^j V_l\right)
$$
Since $M(t)^k: V_{l}^{\tau_i} \to V_l^{\sigma^k \cdot \tau_i}$ and the $\sigma$-orbit of $\tau_i$ has length $d$, we have that $p_k(t)=0$ for $1\leq k<d$.
By Newton's identities, we have $jE_j=\sum_{k=1}^j (-1)^{k-1}E_{j-k}p_k$. 
Hence we have $E_j(t)=0$ for $1 \leq j < d$
and $E_d(t)=\frac{(-1)^{d-1}}{d}\,p_d(t)$.

Now recall that $H^2(X_{\overline L},\Q_l)=V_l\oplus\NS_l$, where $\NS_l:=\NS(X_{\overline L})\otimes\Q_l$, and $M(t)$ on $\NS_l$ is constant. Therefore,
\begin{equation*}
\begin{split}
\Tr\left(M(t)\,\middle|\,\wedge^d H^2\right) &= \sum_{j=0}^d \Tr\left(M(t)\,\middle|\,\wedge^j V_l\right) \Tr\left(M(t)\,\middle|\,\wedge^{d-j} \NS_l\right)  \\
&=\Tr\left(M(t)\,\middle|\,\wedge^d V_l\right)+\Tr\left(M(t)\,\middle|\,\wedge^d \NS_l\right)\\
&=k_\sigma + \frac{(-1)^{d-1}}{d}\,p_d(t)
\end{split}
\end{equation*}
where $k_\sigma$ is the contribution from
$\wedge^d\NS_l$ and is independent of $t$. Thus it remains to show that
$p_d(t)=\Tr\left(M(t)^d\,\middle|\,V_l\right)$ is nonconstant.

\begin{enumerate}
\item Suppose first that $d=e$. Then recall that the $\sigma$-orbits of $\tau_1$ and $\tau_1^*$ are disjoint and each has length $d$. Therefore, 
\[M(t)^d \mid_{V_{l}^{\tau_i}}=
\begin{cases}
A_1 & \text{ if $\tau_i$ is in the $\sigma$-orbit of $\tau_1$}\\
A_1^* & \text{ if $\tau_i$ is in the $\sigma$-orbit of $\tau_1^*$}\\
I_n & \text{ otherwise}
\end{cases}\]
Hence $p_d(t)=d \left( \Tr(A_1(t))+\Tr(A_1^*(t))\right)+(2m-2d)n$. Take $A_1(t)=\diag(t,1,\ldots,1)$. Since $J_1$ is diagonal, we have
$A_1(t)^*=\diag(t^{-1},1,\ldots,1)$. Therefore,
$$
p_d(t)=2mn+d(t+t^{-1}-2),
$$
which is non-constant in $t$.

\item Suppose $d=2e$. Then $\tau_i$ and $\tau_i^*$ are in the same $\sigma$-orbit of length $d$. Therefore,
\[
M(t)^d\big|_{V_l^{\tau_i}}
=
\begin{cases}
A_1A_1^*
& \text{if $\tau_i=\sigma^{j}\cdot\tau_1$ for some $0\leq j< e$,}\\
A_1^*A_1
& \text{if $\tau_i=\sigma^{j}\cdot\tau_1^*$ for some $0\leq j< e$,}\\
I_n
& \text{otherwise.}
\end{cases}
\]

Hence $p_d(t)
=
d\Tr\left(A_1(t)A_1(t)^*\right)
+
(2m-d)n$. Take $A_1(t)
=\begin{pmatrix}
1&t\\
0&1
\end{pmatrix}
\oplus I_{n-2}$. Write $J_1=\diag(j_1,j_2, \dots, j_n)$,
then $\Tr\left(A_1(t)A_1(t)^*\right)
=
n-\frac{j_1}{j_2}t^2$,
and hence
$$
p_d(t)
=
2mn-d\frac{j_1}{j_2}t^2,
$$
which is nonconstant in $t$.

\end{enumerate}

\item Suppose $F$ is totally real. By Lemma~\ref{lem: block matrix form of GF},
$$
A=
\diag(A_1,\ldots,A_m),
\qquad
A_i\in \SO_{2n}.
$$
We take $A(t)=\diag(A_1(t),I_{2n},\ldots,I_{2n})$.
The action of $P_\sigma$ on the set of character spaces has cycles of
length $d$, so the same argument as above gives
$\Tr\left(M(t)\,\middle|\,\wedge^d H^2\right)
=
k_\sigma+
\frac{(-1)^{d-1}}{d}\,p_d(t)
$. By the same argument as the case where $d=e$, we have $M(t)^d \mid_{V_l^{\tau_i}}=A_1$ when $\tau_i$ is in the $\sigma$-orbit of $\tau_1$ and is $\text{I}_{2n}$ otherwise. Hence, 
$p_d(t)=d\Tr(A_1(t))+2n(m-d)$. Take 
$A_1(t)=\diag(t)\oplus \text{I}_{n-1}\oplus \diag(t^{-1}) \oplus \text{I}_{n-1}\in\SO_{2n}$.
Then
$$
p_d(t)
=
2mn+d(t+t^{-1}-2),
$$
which is nonconstant in $t$.

\end{enumerate}
Therefore $b_\sigma$ is non-constant on $B_\sigma G_F$.
\end{proof}

Propositions~\ref{prop:density-bound} and \ref{prop: bsigma non constant} imply the following theorem:

\begin{customthm}{\ref{Thm: theorem 2}}
Let $X$ be a K3 surface over a number field $L$ satisfying $(A1), (A2)$ and $(A3)$. Then $\rho(S[\nu])=0$. That is, the set of primes of $L$ with $\mu$-ordinary reduction has density 1.
    
\end{customthm}

\begin{customcor}{\ref{cor:1}}
Let $X$ be a $K3$ surface over a number field $L$ satisfying $(A1), (A2)$ and $(A3)$. Then, the set of primes of $L$ with ordinary reduction has density $\frac{1}{[FL:L]}$.
\end{customcor}

\begin{proof}
Because the $\mu$-ordinary polygon is ordinary if and only if $p$ is totally split in $F/\Q$, the set of ordinary primes of $L$ is the set of primes $\mathfrak{p}$ such that $\iota(\Fr_{\pp}|_{FL})$ is the identity element in $\text{Gal}(F/\Q)$. This set has density $\frac{1}{[FL:L]}$ by Chebotarev's density theorem.

\end{proof}

To show that the assumptions of Theorem~\ref{Thm: theorem 2} are non-vacuous, we exhibit an explicit one-dimensional family of K3 surfaces whose generic members satisfy assumptions $(A1), (A2)$ and $(A3)$.

\begin{example}\label{ex: explicit K3 family}
Consider the $1$-dimensional family of curves $C_{\alpha}$ together with the fixed cover $D$:
\begin{align*}
\begin{split}
C_\alpha: y^5&=x(x-1)(x-\alpha),\qquad \alpha\neq 0,1\\
D: z^5&=t(t-1) \\
\end{split}
\end{align*}
Both $C_{\alpha}$ and $D$ are $\mu_5$-covers of $\Po$ and hence admit the natural $\mu_5$-action.  Notice that $C_{\alpha}$ is Moonen's special family $M[11]$ in \cite[Table 1]{moonen2010special}.

Consider the $\mu_5$-action on $C_{\alpha} \times D$ given by
$$
\zeta_5\cdot(x,y,t,z)=(x,\zeta_5y,t,\zeta_5^{-1}z).
$$
Consider the GIT quotient $S_\alpha :=(C_\alpha\times D)/\mu_5$,  and let $X_\alpha$ be the minimal model of the resolution of $S_\alpha$. The surface $S_\alpha$ has only cyclic quotient singularities. By the standard Hirzebruch--Jung resolution computation in \cite[Section 1.2]{pol}, one obtains $K_{X_\alpha}^2=0$. Furthermore, by the signature formula in \cite[Equation (2.4)]{li2019newton} and the K\"unneth formula, one obtains $h^{2,0}(X_\alpha)=1$ and $h^{1,0}(X_\alpha)=0$. By the classification of product-quotient K3 surfaces in
\cite[Theorem 5.1 and Table 1]{garbagnati2015k3}, we see that
$X_\alpha$ is a K3 surface.

Fix a number field $L$ and choose $\alpha\in L$ such that $C_{\alpha}$ is generic. That is, $J(C_{\alpha})$ has no extra endomorphisms beyond those induced by the natural action of $\mu_5$. We now verify that $X_{\alpha}$ satisfies assumptions $(A1)$--$(A3)$.

Since $C_{\alpha}$ is generic, we have 
$F:=\EE(X_\alpha)=\Q(\zeta_5)$, 
so $F/\Q$ is abelian and $(A1)$ holds. Since $F=\Q(\zeta_5)$ is a CM field and the generic member $X_\alpha$ is non-CM, $(A2)$ holds.

It remains to verify $(A3)$. For any $g\in\Gal(\overline L/FL)$, the $F$-action is defined over $FL$, so $\rho_{X_\alpha,l}(g)$ commutes with the image of $F$. Hence $\rho_{X_\alpha,l}(g)\in\operatorname{Cent}_{\GO(V_l,\Psi_l)}(F)=\GO_F(V_l,\Psi_l)$.

Since $F$ is CM, by the discussion at the start of Section~\ref{sec: orthogonal group},
$$
\GO_F(V_l,\Psi_l)\otimes_{\Q_l}\overline{\Q}_l
\simeq
\mathbb{G}_m\times\prod_{i=1}^m\GL_n.
$$
Thus $\GO_F(V_l,\Psi_l)$ is connected, and by Corollary~\ref{anothergroup},
$\GO_F(V_l,\Psi_l)=G_{X_\alpha,l}^{\circ}$. Therefore, $$\rho_{X_\alpha,l}\bigl(\Gal(\overline L/FL)\bigr)\subseteq G_{X_\alpha,l}^{\circ},$$ 
so $\Lconn\subseteq FL$. Hence $(A3)$ holds.

For a generic K3 surface in the family, Theorem~\ref{Thm: theorem 2} implies that the density of primes of $L$ at which $X_\alpha$ has $\mu$-ordinary reduction is $1$. By Corollary~\ref{cor:1}, the density of ordinary primes is
$$
\frac{1}{[FL:L]}
=
\begin{cases}
1, & L \supseteq \Q(\zeta_5),\\
\frac12, & L \cap \Q(\zeta_5)=\Q(\sqrt5),\\
\frac14, &  L \cap \Q(\zeta_5)=\Q.
\end{cases}
$$
\end{example}

\textbf{Acknowledgment}: This research is funded by Vietnam National University, Ho Chi Minh City under grant number \textbf{DS2024-18-03}. We would like to thank Elena Mantovan for helpful discussions that motivated this research. 
\bibliographystyle{plain}
\bibliography{example.bib}

\end{document}